\newtheorem{thm}{Theorem}[section]
\newtheorem{prop}[thm]{Proposition}
\newtheorem{lemma}[thm]{Lemma}
\newtheorem{cor}[thm]{Corollary}
\newtheorem{rmk}[thm]{Remark}
\newtheorem{conjecture}[thm]{Conjecture}
\theoremstyle{definition}
\newtheorem{example}[thm]{Example}
\newtheorem{defn}[thm]{Definition}
\newcommand{\pbcorner}{\ar[dr, phantom, very near start, "\lrcorner"]}
\newcommand{\inl}{\mathtt{inl}}
\newcommand{\inr}{\mathtt{inr}}
\newcommand{\NN}{\mathbb{N}}
\newcommand{\QQ}{\mathbb{Q}}
\newcommand{\RR}{\mathbb{R}}
\newcommand{\II}{\mathbb{I}}
\newcommand{\yoneda}{\mathbf{y}}
\newcommand{\cubset}{\widehat{\square}}
\newcommand{\sets}{\mathbf{Set}}
\newcommand{\hprop}{\mathbf{hProp}}
\newcommand{\forallq}[2]{\forall #1 \mathpunct{.} #2}
\newcommand{\existsq}[2]{\exists #1 \mathpunct{.} #2}
\title{Double negation stable h-propositions in cubical sets}
\author{Andrew W Swan}
\date{\today}
\thanks{This material is based upon work supported by the Air Force Office of
Scientific Research under award number FA9550-21-1-0009. Any opinions,
findings, and conclusions or recommendations expressed in this
material are those of the author(s) and do not necessarily reflect the
views of the United States Air Force.}
\begin{document}
\begin{abstract}
  We give a construction of classifiers for double negation stable h-propositions in a variety of cubical set models of homotopy type theory and cubical type theory. This is used to give some relative consistency results: classifiers for double negation stable propositions exist in cubical sets whenever they exist in the metatheory; the Dedekind real numbers can be added to homotopy type theory without changing the consistency strength; we construct a model of homotopy type theory with extended Church's thesis, which states that all partial functions with double negation stable domain are computable.
\end{abstract}
\maketitle

\section{Introduction}
\label{sec:introduction}

In approaches to the semantics of homotopy type theory (HoTT) based on model structures, such as simplicial sets \cite{voevodskykapulkinlumsdainess} and cubical sets \cite{bchcubicalsets, coquandcubicaltt, abchfl, awodey19}, we have categories that can be regarded as models of type theory in two different ways. Simplicial sets and cubical sets are toposes and as such can be viewed as models for extensional type theory. They also have notions of Kan fibration and homotopy that are combined with the topos structure to produce models of HoTT. In particular there are two different definitions of families of propositions in a given context, depending on whether we interpret equality according to the locally cartesian closed structure, or using homotopy as in \cite{awodeywarren}. A fibration between fibrant objects, \(f : X \to Y\), is a proposition in context \(Y\) according to the underlying topos structure when it is a monomorphism, which equivalently says the diagonal map \(X \to X \times_Y X\) has a section. It is a proposition from the perspective of homotopy when the map \(\operatorname{Path}_Y(X) \to X \times_Y X\) has a section, where \(\operatorname{Path}_Y(X)\) is the type of paths in \(X\) over \(Y\), defined as the following pullback.
\begin{displaymath}
  \begin{tikzcd}
    \operatorname{Path}_Y(X) \ar[d] \ar[r] & X^\II \ar[d] \\
    Y \ar[r] & Y^\II
  \end{tikzcd}
\end{displaymath}
To avoid confusion with monomorphisms, we will refer to the latter as \emph{homotopy propositions} or just \emph{h-propositions}.

In simplicial sets in a classical setting, where we have the law of excluded middle and axiom of choice, there is a tight correspondence between the two definitions. Every h-proposition is equivalent to a pullback of the coproduct inclusion \(1 \to 1 + 1\) and in particular must be equivalent to a monomorphism  (see e.g. \cite{kapulkinlumsdainelem} or \cite[Section 5.2]{christensennal}). On the other hand, in a constructive setting h-propositions can in general behave quite differently to monomorphisms (see e.g. \cite[Section 5]{uemuracubasm} or \cite[Section 4.1]{uemuraswan}). In this paper we will consider a restricted class of h-propositions: those which are stable under double negation, which we refer to as \emph{\(\neg \neg\)-stable h-propositions}. For this restricted class we can recover some of the good behaviour of h-propositions in a classical setting, and in particular construct classifying objects giving us a restricted form of resizing. We will show \(\neg \neg\)-stable h-propositions suffice to construct the Dedekind real numbers and to formulate and prove consistency of an extended version of Church's thesis for partial functions \(\NN \rightharpoondown \NN\).

\section{Review of cubical sets}
\label{sec:review-cubical-sets}

We recall some basic definitions and theorems about cubical sets. Although there are a few variations on the definition of cubical sets \cite{coquandcubicaltt, abchfl, awodey19}, for this paper we will only use a few properties that hold for several of the different definitions. Throughout we assume we are working in a metatheory of extensional type theory with propositional truncation, i.e. the internal language of a regular locally cartesian closed category \cite{awodeybauerpat, maiettimodular}.

Firstly, we assume that we are given a category \(\square\) which is a Lawvere theory, i.e. \(\square\) has finite products, and an object \(I\) such that any object is an \(n\)-fold product of \(I\) for some \(n \in \NN\). We write \(I^n\) as \([n]\). In particular \([0]\) is the terminal object of \(\square\). We refer to \(\square\) as the \emph{category of cubes}.

We refer to the presheaf category \(\widehat{\square}\) as the category of \emph{cubical sets}. For a cubical set, \(X\), we write the set at an object \([n]\) as \(X_n\), and for \(s : [n] \to [m]\) we write \(X_s\) for the function \(X_m \to X_n\).

Cubical sets can be used to model homotopy type theory. Maps \(X \to Y\) in \(\widehat{\square}\) may possess a kind of structure known as \emph{Kan fibration structure}, and types in homotopy type theory are interpreted as maps in \(\widehat{\square}\) together with Kan fibration structure. We refer to a pair consisting of a map \(f : X \to Y\) and a Kan fibration structure on \(f\) as a \emph{Kan fibration}, or just \emph{fibration}. A \emph{fibrant object} is an object \(X\) together with a Kan fibration \(X \to 1\).

\begin{defn}
  We say a map \(m : A \to B\) is a \emph{trivial cofibration} if we can choose a diagonal filler for each lifting problem of \(m\) against a fibration \(f : X \to Y\). That is, given a commutative square as in the solid lines in the diagram below, we have a choice of map \(j : B \to X\) making two commutative triangles, as in the dotted line below.
  \begin{displaymath}
    \begin{tikzcd}
      A \ar[r] \ar[d, "m"] & X \ar[d, "f"] \\
      B \ar[r] \ar[ur, dotted, "j" description] & Y
    \end{tikzcd}
  \end{displaymath}
\end{defn}

We will use the following facts about Kan fibrations and the interpretation of homotopy type theory in cubical sets:
\begin{enumerate}
\item Contexts are interpreted as objects \(Y\), types in context \(Y\) are interpreted as Kan fibrations \(X \to Y\) and terms of a type \(f : X \to Y\)  are interpreted as sections of \(f\). The empty context is interpreted as the terminal object.
\item Fibrations are preserved by pullback along any map, and this is used to interpret substitution in HoTT.
\item The natural number object and initial object are fibrant and they are the underlying objects of the interpretations of the natural number type and empty type in the interpretation of HoTT.
\item Kan fibrations are closed under composition and composition is used to interpret \(\Sigma\) types in the model of HoTT.
\item Kan fibrations are closed under dependent product and dependent products are used to interpret \(\Pi\) types in the model of HoTT.
\item \(I\) has at least one global section, say \(\delta : 1 \to I\).
\item \label{it:kanishurewicz} For every object \(A\) and every map \(d : 1 \to \yoneda I\) the map \(d \times A : A \to \yoneda I \times A\) is a trivial cofibration. ``Kan fibrations are in particular Hurewicz fibrations.''
\item Cubical sets admit propositional truncation operators \(\| - \|_Y : \cubset/Y \to \cubset/Y\) which are preserved by reindexing and used to interpret propositional truncation in HoTT. (See e.g. \cite{cavalloharper, chmhitsctt, abchfl})
\item Any map between constant cubical sets is a fibration.
\end{enumerate}

\begin{prop}
  \label{prop:cubeshavepoints}
  Every object of \(\square\) admits a global section.
\end{prop}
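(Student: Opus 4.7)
The plan is to use the fact that $\square$ is a Lawvere theory with generating object $I$, together with assumption (6) that $I$ has a global section $\delta : [0] \to I$. Since every object of $\square$ is of the form $[n] = I^n$ for some $n \in \NN$, and $[0]$ is terminal, the universal property of the $n$-fold product gives global sections built from $\delta$.

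More precisely, I would proceed by induction on $n$. In the base case $n = 0$, the object $[0]$ is terminal, so the identity $\mathrm{id}_{[0]}$ is trivially a global section. For the inductive step, suppose $[n]$ admits a global section $s : [0] \to [n]$. Since $[n+1] = I^{n+1} \cong I^n \times I = [n] \times I$, the pair $\langle s, \delta \rangle : [0] \to [n] \times I$ (obtained from the universal property of the product, using that $[0]$ is itself terminal hence its own product with itself) gives a global section of $[n+1]$.

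There is no real obstacle here: everything follows from the Lawvere theory assumption plus assumption (6). The only point to be careful about is confirming that the global section $\delta$ provided by (6) lives in $\square$ and not merely in $\cubset$; but by the Yoneda lemma, a map $1 \to \yoneda I$ in $\cubset$ is the same as a map $[0] \to I$ in $\square$, so this is harmless.
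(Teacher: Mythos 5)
Your proof is correct and follows essentially the same route as the paper: induction on $n$, using that $[0]$ is terminal for the base case and combining the inductive section $s$ with $\delta$ via the product $[n+1] \cong [n] \times I$ for the step (the paper writes this map as the composite $([n] \times \delta) \circ s$, which is the same arrow as your pairing $\langle s, \delta\rangle$ since $[0]$ is terminal). Your closing remark about Yoneda is unnecessary but harmless, since assumption (6) already supplies $\delta$ as a map in $\square$.
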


\begin{proof}
  We show this for all objects \(n\) of \(\square\) by induction on \(n\). For \(n = 0\) \([n]\) is the terminal object, so we can use the identity map. For any \(n\), \([n + 1] \cong [n] \times [1]\), and so we have e.g. \([n] \times \delta : [n] \to [n + 1]\). Given a map \(s : [0] \to [n]\) we can compose to get \(([n] \times \delta) \circ s : [0] \to [n + 1]\).
\end{proof}

\begin{lemma}
  \label{lem:termrepristcof}
  For any representable \(\yoneda [n]\), any map \(1 \to \yoneda [n]\) is a trivial cofibration.
\end{lemma}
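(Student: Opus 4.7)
The plan is to prove this by induction on \(n\), reducing to the Hurewicz-fibration property (item (\ref{it:kanishurewicz})) at each step. The key observation is that since \(\square\) is a Lawvere theory we have \([n+1] \cong [n] \times I\), and since the Yoneda embedding preserves products we get \(\yoneda[n+1] \cong \yoneda[n] \times \yoneda I\). So any map \(f : 1 \to \yoneda[n+1]\) factors (up to isomorphism) as
\begin{displaymath}
  \begin{tikzcd}
    1 \ar[r, "g"] & \yoneda[n] \ar[r, "\yoneda{[n]} \times d"] & \yoneda[n] \times \yoneda I
  \end{tikzcd}
\end{displaymath}
where \(g\) and \(d\) are the two projections of \(f\).

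For the base case \(n = 0\) the object \(\yoneda[0]\) is terminal so the only map \(1 \to \yoneda[0]\) is the identity, which is certainly a trivial cofibration. For the inductive step I use two ingredients: first, that trivial cofibrations are closed under composition, which is immediate from the definition (given a lifting problem for the composite against a fibration, lift one at a time); second, that by property (\ref{it:kanishurewicz}) applied with \(A = \yoneda[n]\), the map \(d \times \yoneda[n] : \yoneda[n] \to \yoneda I \times \yoneda[n]\) is a trivial cofibration, and the same holds for \(\yoneda[n] \times d\) by composing with the symmetry isomorphism \(\yoneda I \times \yoneda[n] \cong \yoneda[n] \times \yoneda I\) (isomorphisms are trivial cofibrations, and trivial cofibrations compose). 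Combined with the inductive hypothesis that \(g\) is a trivial cofibration, the factorisation exhibits \(f\) as a composite of two trivial cofibrations and hence a trivial cofibration.

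I do not expect any real obstacle: the whole argument is bookkeeping around products, relying only on the Lawvere theory structure, preservation of products by Yoneda, and the two cited facts (closure under composition and the Hurewicz property). The only small subtlety to double-check is that the isomorphism \(\yoneda[n+1] \cong \yoneda[n] \times \yoneda I\) correctly identifies an arbitrary map \(1 \to \yoneda[n+1]\) with a pair \((g, d)\) so that the factorisation above is legitimate; this is standard and uses nothing beyond the universal property of the product.
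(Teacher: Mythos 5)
Your proof is correct and follows essentially the same route as the paper: induction on \(n\), factoring the given map through \(\yoneda[n]\) via the product decomposition \(\yoneda[n+1] \cong \yoneda[n] \times \yoneda I\), and combining the inductive hypothesis with the Hurewicz property (\ref{it:kanishurewicz}) and closure of trivial cofibrations under composition. The only cosmetic difference is that the paper first identifies maps \(1 \to \yoneda[n]\) with maps \([0] \to [n]\) in \(\square\) via \(1 = \yoneda[0]\) and factors there, whereas you factor directly in cubical sets; you are in fact slightly more careful than the paper in spelling out the symmetry isomorphism needed to match the order of the product in (\ref{it:kanishurewicz}).
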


\begin{proof}
  Note that \(1 = \yoneda [0]\). Hence it suffices to show by induction on \(n\) that for any map \(s : [0] \to [n]\), \(\yoneda s : \yoneda [0] \to \yoneda [n]\) is a trivial cofibration. For \(n = 0\), since \([0]\) is the terminal object, any map \([0] \to [0]\) is an isomorphism, and so a trivial cofibration.
  
  Given any map \(s : [0] \to [n + 1]\), we have \([n + 1] \cong [n] \times [1]\), and so we can factor \(s\) as \(([n] \times d) \circ s'\), where \(d : [0] \to [1]\) is defined by \(d := \pi_1 \circ s\) and \(s' : [0] \to [n]\) is defined by \(\pi_0 \circ s\). By induction \(\yoneda s'\) is a trivial cofibration, and \(\yoneda [n] \times \yoneda d\) is a trivial cofibration, by \ref{it:kanishurewicz} in the list of basic facts about cubical sets above.
\end{proof}

Since \(\square\) has a terminal object, the unique functor \(\square \to 1\) has a right adjoint. This induces a string of adjunctions \(\Delta \dashv \Gamma \dashv \nabla\) by reindexing and right Kan extension as illustrated below.
\begin{displaymath}
  \begin{tikzcd}[sep=5em]
    \sets \ar[r, bend left, yshift=0.6em, "\Delta" description] \ar[r, bend right, swap, yshift=-0.6em, "\nabla" description]
    \ar[r, phantom, yshift = 1em, "\perp"] \ar[r, phantom, yshift = -1em, "\perp"] & \cubset \ar[l, "\Gamma" description]
  \end{tikzcd}
\end{displaymath}

Since \(\Delta\) can be explicitly described as reindexing along the unique functor \(\square \to 1\), one can easily show the following proposition.
\begin{prop}
  \(\Delta\) preserves all limits, colimits and dependent products.
\end{prop}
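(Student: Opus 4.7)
The plan is to treat the three preservation properties separately, starting with the two easy ones. Because $\Delta$ is exhibited as a left adjoint to $\Gamma$, it preserves colimits automatically. For limits, one can argue in two equivalent ways: since $\Delta$ is reindexing along \(u : \square \to 1\) and limits in presheaf categories are computed pointwise, reindexing preserves them; alternatively, the adjunction \(u \dashv t\) on the base (where \(t : 1 \to \square\) picks out the terminal object \([0]\)) gives rise to a left adjoint to \(\Delta\), namely evaluation at \([0]\), exhibiting \(\Delta\) itself as a right adjoint.

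For dependent products I would unpack the standard formula for \(\Pi\) in a presheaf topos. Given maps of cubical sets \(F : \mathcal{X} \to \mathcal{Y}\) and \(G : \mathcal{Z} \to \mathcal{X}\), an element of \((\Pi_F G)_n\) is a pair consisting of a generalized element \(y : \yoneda[n] \to \mathcal{Y}\) together with a section of the pullback \(y^\ast G \to \yoneda[n] \times_{\mathcal{Y}} \mathcal{X}\). Specialising this to \(F = \Delta f\) and \(G = \Delta g\) arising from \(f : X \to Y\) and \(g : Z \to X\) in \(\sets\), I would use that \(\Delta Y\) has identity structure maps, so that \(y\) amounts to a single element \(y \in Y\), and that the pullback \(\yoneda[n] \times_{\Delta Y} \Delta X\) reduces to \(\yoneda[n] \times \Delta f^{-1}(y)\).

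A short naturality argument then shows that any section of \(y^\ast \Delta g\) over this pullback is constant in the \(\yoneda[n]\)-coordinate: evaluating at \((\operatorname{id}_{[n]}, x)\) yields an element \(\phi(x) \in g^{-1}(x)\), and naturality for \(t : [k] \to [n]\) forces \(z_k(t, x) = \phi(x)\). Hence the whole datum is determined by a function \(\phi : f^{-1}(y) \to Z\) sectioning \(g\). Reassembling, \((\Pi_{\Delta f} \Delta g)_n\) is in natural bijection with the set \(\Pi_f g\) and has identity structure maps, which is exactly \(\Delta(\Pi_f g)\).

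The main obstacle is bookkeeping—ensuring the pullback computation and the naturality argument for sections line up correctly—but no new idea is needed. As an alternative, one could subsume the dependent-product step in the abstract observation that the left adjoint to \(\Delta\) (evaluation at \([0]\)) preserves finite limits, so the geometric morphism \((\Delta \dashv \Gamma)\) is locally connected and its inverse image preserves \(\Pi\)-types by Frobenius.
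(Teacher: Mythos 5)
Your main line of argument is correct and supplies exactly the computation that the paper leaves implicit (the paper offers no proof beyond remarking that the claim follows easily from \(\Delta\) being reindexing along \(\square \to 1\)): colimit preservation from \(\Delta \dashv \Gamma\), limit preservation from the pointwise computation of limits in presheaf categories, and the direct unwinding of the \(\Pi\)-formula, where the key point --- that a section over \(\yoneda[n] \times \Delta(f^{-1}(y))\) of a constant family is forced by naturality to be constant in the \(\yoneda[n]\)-coordinate, hence is just a section of \(g\) over \(f^{-1}(y)\) --- is exactly right.

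Both of your ``alternative'' remarks, however, rest on the same misidentification: evaluation at \([0]\) is not the left adjoint of \(\Delta\) but its right adjoint. The adjunction \(u \dashv t\) on the base induces \(\Delta = (u^{op})^\ast \dashv (t^{op})^\ast = \operatorname{ev}_{[0]}\), which is precisely why \(\Gamma X \cong X_0\); the left adjoint of \(\Delta\) is instead the connected-components functor \(\pi_0 = \operatorname{colim}_{\square^{op}}\), which is not an evaluation functor since \(\square\) has no initial object in general. Consequently the closing alternative for dependent products is also off: \(\pi_0\) does not preserve finite limits (for instance it fails to preserve the equalizer of two distinct endpoints \(1 \rightrightarrows \yoneda[1]\) when \(I\) has two of them), and finite-limit preservation of the extra left adjoint is not what local connectedness asks for. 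The correct abstract route is that any presheaf topos over \(\sets\) is locally connected because \(\pi_0\) is a \(\sets\)-indexed left adjoint, and local connectedness is equivalent to the inverse image preserving dependent products. None of this damages your proof, since the primary argument you give for each of the three clauses is sound on its own.
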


\begin{rmk}
  The above results also hold for simplicial sets, so the arguments below will also apply there.
\end{rmk}

We can now show a key theorem about the behaviour of propositional truncation in cubical sets, using a technique due to Uemura. The idea here is that propositional truncation does not identify points of a type, but only adds new paths between them. Hence we should visualise h-propositions in general not as spaces with at most one point, but rather as many points where any two are connected by a path.
\begin{thm}
  \label{thm:detruncatedisccod}
  Suppose we are given a fibration \(f : W \to \Delta Z\) in cubical sets. If \(\| W \|_{\Delta Z} \to \Delta Z\) has a section, then so does the map \(\Gamma W \to \Gamma \Delta Z \cong Z\).
\end{thm}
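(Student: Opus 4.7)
The plan is to transpose through the adjunction $\Delta \dashv \Gamma$. Sections of $\Gamma W \to Z$ correspond bijectively to sections of $f : W \to \Delta Z$: given $\sigma : Z \to \Gamma W$ with $\Gamma f \circ \sigma = \mathrm{id}_Z$, the adjoint transpose $\tilde\sigma : \Delta Z \to W$ satisfies $f \circ \tilde\sigma = \mathrm{id}_{\Delta Z}$, and conversely. So it suffices to construct a section of $f$ itself.

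Since $\Delta$ preserves colimits, $\Delta Z \cong \coprod_{z \in Z} 1$ in cubical sets. A section of $f$ is therefore the same data as a family of global sections of the fibres $W_z := (\Delta z)^{\ast} W$, one for each $z \in Z$; each such $W_z$ is fibrant as a pullback of a fibration. Pulling the given section $s$ back along $\Delta z : 1 \to \Delta Z$, and using that $\|-\|$ commutes with reindexing, yields for each $z \in Z$ a global section of $\|W_z\| \to 1$.

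The problem therefore reduces to the following key step: for every fibrant cubical set $V$, if $\|V\|$ admits a global section, then so does $V$. I expect this reduction to be the main obstacle, and it is where the technique due to Uemura is needed. The idea is to inspect the higher inductive construction of $\|V\|$: the only $0$-dimensional constructor is $|-| : V \to \|V\|$, the path constructor $\mathrm{squash}$ produces $1$-cells whose endpoints are pre-existing $0$-cells, and the Kan composition operations only fill horns of dimension at least $1$. Hence $|-|_0 : V_0 \to (\|V\|)_0$ is surjective, so a global section of $\|V\|$, viewed as an element of $(\|V\|)_0$, lifts to an element of $V_0$ and thus to a global section of $V \to 1$. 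Assembling these choices across $Z$ then produces the required section of $f$, which transposes back to the desired section of $\Gamma W \to Z$.
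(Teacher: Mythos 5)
Your reduction to a fibrewise statement is reasonable in spirit, but the key step is justified incorrectly, and the error is precisely the point the theorem is designed to get around. It is not true that \(|-|_0 : V_0 \to (\|V\|)_0\) is surjective. Besides the point constructor \(|-|\) and the path constructor \(\mathrm{squash}\), the fibrant HIT \(\|V\|\) is closed under homogeneous composition, and \(\mathrm{hcomp}\) produces new cells in \emph{every} dimension, including dimension \(0\): composing a \(0\)-cell along a partial path (the ``lid'' of an open box whose base is a point) yields a formal new point of \(\|V\|\) that need not lie in the image of \(V_0\). So a global section of \(\|V\|\) may well be one of these formal composites, and your proposed lift to \(V_0\) does not exist. (Even where surjectivity did hold, ``assembling these choices across \(Z\)'' would require choice in the metatheory, which the paper does not assume; and the decomposition \(\Delta Z \cong \coprod_{z \in Z} 1\) over global points is itself not available in the constructive metatheory the paper works in.) Indeed, your stated key step — a global section of \(\|V\|\) yields a global section of \(V\) — is exactly the content of Corollary \ref{cor:detruncate} for \(Y = 1\), so it cannot be taken as an input to the proof by this kind of direct inspection.

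The actual technique (due to Uemura) goes through the \emph{codiscrete} object rather than trying to lift along \(V \to \|V\|\). One forms \(\nabla_Z \Gamma W\), the pullback of \(\nabla \Gamma W \to \nabla Z\) along \(\Delta Z \to \nabla Z\); this is an h-proposition and a fibration over \(\Delta Z\), and it receives a map from \(W\) over \(\Delta Z\). By the universal property of propositional truncation the map \(W \to \nabla_Z \Gamma W\) factors through \(\|W\|_{\Delta Z}\), so the hypothesised section of \(\|W\|_{\Delta Z} \to \Delta Z\) yields a section of \(\nabla_Z \Gamma W \to \Delta Z\), which transposes under \(\Gamma \dashv \nabla\) to the desired section of \(\Gamma W \to Z\). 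The moral is that while truncation adds new points, all of them are connected by (zig-zags of) paths to old ones, and the codiscrete reflection is the device that extracts an honest point of \(\Gamma W\) from this connectivity without ever inverting \(|-|_0\). Your adjunction bookkeeping at the start is fine; it is this middle step that needs to be replaced.
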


\begin{proof}
  We first recall that \(\nabla_Z \Gamma W\) is defined as follows. The unit map \(Z \to \Gamma \Delta Z\) is an isomorphism, and so has an inverse, say \(i : \Gamma \Delta Z \stackrel{\cong}{\to} Z\). Under the adjunction \(\Gamma \dashv \nabla\), \(i\) corresponds to a map \(\Delta Z \to \nabla Z\), and composing \(i\) with \(\Gamma f : \Gamma W \to \Gamma \Delta Z\) gives a map \(\Gamma W \to Z\). We then define \(\nabla_Z \Gamma W\) as the pullback below.
  \begin{displaymath}
    \begin{tikzcd}
      \nabla_Z \Gamma W \ar[r] \ar[d] \pbcorner & \nabla \Gamma W \ar[d] \\
      \Delta Z \ar[r] & \nabla Z
    \end{tikzcd}
  \end{displaymath}
  The left hand map \(\nabla_Z \Gamma W \to \Delta Z\) is an h-proposition and in particular a Kan fibration - see \cite[Section 4.5]{uemuracubasm} for details. We can then extend the diagram above to the solid lines in the diagram below.
  \begin{displaymath}
    \begin{tikzcd}
      W \ar[d] \ar[drr, bend left] \ar[dr] & & \\
      \| W \|_{\Delta Z} \ar[dr] \ar[r, dotted] & \nabla_Z \Gamma W \ar[d] \ar[r] \pbcorner & \nabla \Gamma W \ar[d] \\
      & \Delta Z \ar[r] & \nabla Z
    \end{tikzcd}
  \end{displaymath}
  Since the map \(\nabla_Z \Gamma W \to \Delta Z\) is an h-proposition we obtain the dotted map in the diagram above. Composing this with the section of \(\| W \|_{\Delta Z} \to \Delta Z\) gives us a section of the map \(\nabla_Z \Gamma W \to \Delta Z\). We can then use this to obtain a section of \(\Gamma W \to \Gamma \Delta Z\) using the adjunction \(\Gamma \dashv \nabla\), as in \cite[Section 4.5]{uemuracubasm}.
\end{proof}

\begin{rmk}
  Theorem~\ref{thm:detruncatedisccod} is fairly specific to cubical sets.
  In order to apply Uemura's proof we additionally need to assume that cofibrations are pointwise decidable and that the interval is representable, has disjoint endpoints and no points other than the endpoints.
  Most critically, Uemura's proof is specific to the definition of Kan fibration for cubical sets.
  This is the same as one of the common ways to define Kan fibration in simplicial sets, but e.g. does not apply to localisations, which can have a smaller class of fibrations on the same underlying category.
\end{rmk}

\begin{cor}
  \label{cor:detruncate}
  Given a Kan fibration \(X \to Y\) in cubical sets, if the fibration \(\| X \|_Y \to Y\) has a section, then so does the map \(\Gamma X \to \Gamma Y\).
\end{cor}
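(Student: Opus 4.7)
The plan is to reduce the corollary to Theorem \ref{thm:detruncatedisccod} by replacing the general base $Y$ with one of the form $\Delta Z$. The natural choice is $Z := \Gamma Y$ together with the counit $\epsilon_Y : \Delta \Gamma Y \to Y$ of the adjunction $\Delta \dashv \Gamma$. Pulling back $f : X \to Y$ along $\epsilon_Y$ produces a fibration $f' : X' \to \Delta \Gamma Y$ (fibrations being closed under pullback, per fact (2) in the list) together with a comparison map $p : X' \to X$ fitting in a pullback square.

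Next, I would use that propositional truncation is preserved by reindexing (fact (8)), so the pullback of $\|X\|_Y \to Y$ along $\epsilon_Y$ is isomorphic to $\|X'\|_{\Delta \Gamma Y} \to \Delta \Gamma Y$. The hypothesised section of $\|X\|_Y \to Y$ therefore pulls back to a section of $\|X'\|_{\Delta \Gamma Y} \to \Delta \Gamma Y$, placing us in the hypotheses of Theorem \ref{thm:detruncatedisccod} with $W := X'$ and $Z := \Gamma Y$. The theorem then yields a section $\sigma : \Gamma Y \to \Gamma X'$ of the map $\Gamma X' \to \Gamma \Delta \Gamma Y \cong \Gamma Y$, where the identification with $\Gamma Y$ is the inverse of the unit $\eta_{\Gamma Y}$; by the triangle identity $\Gamma \epsilon_Y \circ \eta_{\Gamma Y} = \mathrm{id}$, this inverse is precisely $\Gamma \epsilon_Y$.

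Finally, I would define $s := \Gamma p \circ \sigma : \Gamma Y \to \Gamma X$ and verify $\Gamma f \circ s = \mathrm{id}_{\Gamma Y}$. Since $\Gamma$ is a right adjoint it preserves the pullback square, giving $\Gamma f \circ \Gamma p = \Gamma \epsilon_Y \circ \Gamma f'$; hence $\Gamma f \circ s = \Gamma \epsilon_Y \circ \Gamma f' \circ \sigma$, which is the identity by the section property granted by the theorem. The only mildly delicate point is the bookkeeping needed to identify the canonical iso $\Gamma \Delta \Gamma Y \cong \Gamma Y$ with $\Gamma \epsilon_Y$ so that the pullback square and the theorem's conclusion line up; this drops out of the triangle identity together with $\eta_{\Gamma Y}$ being an isomorphism. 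Beyond that the argument is purely formal, consisting of a single pullback, the preservation properties of $\|-\|$ and $\Gamma$, and one application of the preceding theorem.
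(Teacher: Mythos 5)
Your proposal is correct and follows essentially the same route as the paper: pull back along the counit \(\Delta\Gamma Y \to Y\), use stability of propositional truncation under reindexing, and apply Theorem \ref{thm:detruncatedisccod} with \(Z := \Gamma Y\) and \(W\) the pulled-back fibration. Your extra bookkeeping identifying \(\Gamma\Delta\Gamma Y \cong \Gamma Y\) with \(\Gamma\epsilon_Y\) via the triangle identity just spells out what the paper compresses into ``observe \(\Gamma(e^\ast(X)) \cong \Gamma X\).''
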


\begin{proof}
  Write \(e : \Delta \Gamma Y \to Y\) for the counit map of the adjunction \(\Delta \dashv \Gamma\). We pullback the section of \(\| X \|_Y \to Y\) along \(e\) as illustrated below.
  \begin{displaymath}
    \begin{tikzcd}
      e^\ast \| X \|_{Y} \ar[r] \ar[d] \pbcorner & \| X \|_Y \ar[d] \\
      \Delta \Gamma Y \ar[r] \ar[u, bend left] & Y \ar[u, bend right]
    \end{tikzcd}
  \end{displaymath}
  Propositional truncation is stable under pullback, and so we have \(e^\ast \|X\|_Y \cong \| e^\ast(X)\|_{\Delta\Gamma Y}\). However, we can now apply Theorem \ref{thm:detruncatedisccod} with \(Z := \Gamma Y\) and \(W := e^\ast(X)\) and observe \(\Gamma (e^\ast(X)) \cong \Gamma X\) to obtain the conclusion.
\end{proof}

Theorem~\ref{thm:detruncatedisccod} gives us an easy direct way to see how it can happen that not every h-proposition is equivalent to a monomorphism.
Suppose we are working internally in a category where the axiom of choice fails, i.e. where there is a regular epimorphism \(f : X \to Z\) in \(\sets\) that does not have a section.
Suppose that the h-proposition \(\| \Delta X \|_{\Delta Z} \to \Delta Z\) is logically equivalent to a monomorphism \(m : Y \to \Delta Z\).
Using the logical equivalence, the regular epimorphism \(\Delta(f) : \Delta X \to \Delta Z\) factors through \(m\), which implies \(m\) is an isomorphism.
Using the other direction of the logical equivalence, we see that \(\| \Delta X \|_{\Delta Z} \to \Delta Z\) has a section, and so \(f\) must also have a section.

\section{Internalising classifiers for monomorphisms}
\label{sec:intern-class-monom}
As we remarked at the end of the last section, h-propositions in \(\cubset\) are not necessarily the same as monomorphisms.
However, we have a third class we can consider: monomorphisms of the form \(\Delta(m)\) where \(m\) is a monomorphism in sets.
We might wonder if there are examples of h-propositions in \(\cubset\) that are monomorphisms but do not arise from monomorphisms in \(\sets\) because they are not in the image of \(\Delta\).

The aim of this section is to show that in fact this is \emph{not} the case: once we restrict to h-propositions that are monomorphisms they are necessarily of the form \(\Delta(m)\) where \(m\) is a monomorphism in \(\sets\).
Moreover, we will see a useful theorem that can be applied to classes of monomorphisms in \(\sets\) that are all classified by a single ``universal'' element \(m\).
For a monic fibration \(f : X \to Y\), once we know \(\Gamma(f)\) belongs to the class, we can deduce that \(f\) is a pullback of \(\Delta(m)\).

For this section and the next we only need a smaller subset of the properties of cubical sets.
In fact the results of this section will hold for any category of internal presheaves on an internal category \(\mathcal{C}\) in a locally cartesian closed category equipped with a weak factorisation system with the following properties, referring to the left class of the weak factorisation system as trivial cofibrations and the right class as fibrations.
\begin{enumerate}
  \item \(\mathcal{C}\) has a terminal object.
  \item Every object \(c\) of \(\mathcal{C}\) admits a map \(1 \to c\).
  \item Every map \(1 \to \yoneda c\) is a trivial cofibration.
\end{enumerate}

\begin{prop}
  Let \(m : P \rightarrowtail Q\) be a monomorphism in a locally cartesian closed category \(\mathcal{E}\). The following are equivalent:
  \begin{enumerate}
  \item In the internal language of \(\mathcal{E}\) we have \(\forallq{q, q' : Q}{(P_{q} \leftrightarrow P_{q'}) \rightarrow q = q'}\)
  \item For any monomorphism \(f : X \to Y\), there is at most one map \(\chi : Y \to Q\) forming the bottom map in a pullback diagram of the form below:
    \begin{equation}
      \label{eq:1}
      \begin{tikzcd}
        X \ar[r] \ar[d, "f"] \pbcorner & P \ar[d, "m"] \\
        Y \ar[r, "\chi"] & Q
      \end{tikzcd}
    \end{equation}
  \end{enumerate}
\end{prop}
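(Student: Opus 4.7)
The plan is to move between the external uniqueness statement (2) and the internal claim (1) by using the fibres $Q_p$ of $m$ as the essential data of a classifying map. Since $m$ and $f$ are monomorphisms, their fibres $Q_p$ and $X_y$ are subterminal in their respective slices, so in the internal language the phrase ``isomorphism of fibres'' coincides with ``logical equivalence.''

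For (1) $\Rightarrow$ (2), suppose $\chi, \chi' : Y \to P$ both occur as the bottom map of a pullback square of the form (\ref{eq:1}) classifying the same $f$. Reasoning in the internal language of $\mathcal{E}$, fix $y : Y$. The pullback property of the two squares gives $X_y \simeq Q_{\chi(y)}$ and $X_y \simeq Q_{\chi'(y)}$, and composing yields $Q_{\chi(y)} \leftrightarrow Q_{\chi'(y)}$. Applying (1) with $p := \chi(y)$ and $p' := \chi'(y)$ gives $\chi(y) = \chi'(y)$, so $\chi = \chi'$.

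For (2) $\Rightarrow$ (1), the strategy is to build a test monomorphism that is classified simultaneously by two \emph{a priori} distinct maps. Internally define $Y := \{(p,p') \in P \times P \mid Q_p \leftrightarrow Q_{p'}\}$, with projections $\pi_0, \pi_1 : Y \to P$. Pulling back $m$ along $\pi_0$ gives the subobject $\{(p,p') \in Y \mid Q_p\} \hookrightarrow Y$, and pulling back along $\pi_1$ gives $\{(p,p') \in Y \mid Q_{p'}\} \hookrightarrow Y$. The defining condition of $Y$ makes these the same subobject of $Y$, so a single monomorphism $f : X \hookrightarrow Y$ is classified by both $\pi_0$ and $\pi_1$. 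Condition (2) then forces $\pi_0 = \pi_1$, i.e.\ $Y$ is contained in the diagonal of $P \times P$, which is precisely statement (1).

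The only subtlety worth watching is the passage from ``$Q_p \leftrightarrow Q_{p'}$ in the internal language'' to ``the two pullbacks of $m$ over $Y$ agree as subobjects,'' but this is standard: over any base the pullback of the mono $m$ is the comprehension subobject cut out by the fibre predicate, and logical equivalence of h-propositions corresponds exactly to equality of the corresponding subobjects. No homotopical ingredients are needed here, only the subobject calculus in an lccc.
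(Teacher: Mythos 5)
Your proof is correct and follows essentially the same route as the paper: the forward direction compares the two fibre equivalences $X_y \leftrightarrow Q_{\chi(y)}$ and $X_y \leftrightarrow Q_{\chi'(y)}$ internally, and the converse uses the test object $\sum_{p,p':P} (Q_p \leftrightarrow Q_{p'})$ with its two projections classifying the same subobject. The only cosmetic difference is that you present this test object as a subobject of $P \times P$ rather than a $\Sigma$-type, which amounts to the same thing since the fibres are subterminal.
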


\begin{proof}
  We first show the direction \((\Rightarrow)\). Suppose that we have two maps \(\chi, \chi' : Y \to Q\) fitting into the bottom map of a pullback as in \eqref{eq:1}. To show \(\chi = \chi'\), it suffices to prove in the internal language that for all \(y : Y\) we have \(\chi(y) = \chi'(y)\). However, we have equivalences \(P_{\chi(y)} \leftrightarrow X_{y}\) and \(P_{\chi'(y)} \leftrightarrow X_{y}\) by expressing the fact that the squares are pullbacks in the internal language. Combining these gives an equivalence \(P_{\chi(y)} \leftrightarrow P_{\chi'(y)}\) and so we deduce \(\chi(y) = \chi'(y)\) by assumption.

  We now show the direction \((\Leftarrow)\). We construct in the internal language the type \(Y := \sum_{q, q' : Q} P_{q} \leftrightarrow P_{q'}\). We obtain a monomorphism \(f : X \to Y\) by pulling back \(m\) along the projection map \(\pi_0 : \sum_{q, q' : Q} P_{q} \leftrightarrow P_{q'} \;\to\; Q\). We define \(\chi := \pi_0\). We define \(\chi'\) by projecting out the second \(Q\) component, i.e. \(\chi' := \pi_0 \circ \pi_1\). Using the equivalence \(P_{q} \leftrightarrow P_{q'}\), we can show \(f\) is also the pullback of \(m\) along \(\chi'\). Hence \(\chi = \chi'\) by assumption, and so we have that \(q = q'\) whenever \(P_{q} \leftrightarrow P_{q'}\).
\end{proof}

\begin{defn}
  If \(m : P \to Q\) satisfies the equivalent conditions above, we say it is \emph{extensional}. 
\end{defn}

We recall some standard facts and terminology about extensional monomorphisms.
\begin{defn}
  We refer to the unique map \(\chi\), when it exists, as the \emph{classifying map} for the monomorphism \(f\) in \eqref{eq:1}. We say \(m : P \to Q\) is the \emph{classifier} for the class of monomorphisms obtained by pulling it back along arbitrary maps.
\end{defn}

\begin{rmk}
  A given class of monomorphisms has at most one classifier up to isomorphism.
\end{rmk}

\begin{prop}
  If \(m : P \to Q\) is extensional, then \(P\) is subterminal, and terminal whenever \(\existsq{q : Q}{P_q}\) holds in the internal language.
\end{prop}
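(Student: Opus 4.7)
The plan is to argue entirely in the internal language of $\mathcal{E}$, combining extensionality in the form of condition (1) of the preceding proposition with the fact that the fibers of a monomorphism are subterminal. With the naming convention of the present statement, I read condition (1) as $\forall q, q' : Q\, (P_q \leftrightarrow P_{q'}) \to q = q'$, where $P_q$ denotes the fiber of $m : P \to Q$ over $q$.

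For subterminality of $P$, I would take arbitrary $p, p' : P$ and show $p = p'$. Setting $q := m(p)$ and $q' := m(p')$, the fibers $P_q$ and $P_{q'}$ are subterminal (because $m$ is mono, so every fiber is) and each is inhabited, witnessed by $p$ and $p'$ respectively. Two inhabited subterminals are automatically logically equivalent, so $P_q \leftrightarrow P_{q'}$; extensionality then gives $q = q'$, i.e.\ $m(p) = m(p')$, and monicity of $m$ yields $p = p'$.

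For the second clause, the key observation is that the total space $\sum_{q : Q} P_q$ of $m$ is canonically equivalent to $P$ via the projection $(q, p) \mapsto p$. So the hypothesis $\exists q : Q\, P_q$, i.e.\ $\|\sum_{q : Q} P_q\|$, is equivalent to $\|P\|$; combining inhabitedness with the subterminality already established in the first clause shows that $P$ is terminal.

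I do not anticipate any real obstacle here: the argument is a short internal-language manipulation. The only point requiring a little care is keeping straight the shift in notation between the original proposition (where $m : Q \to P$ and the classifier is $P$) and the present statement (where $m : P \to Q$ and the classifier is $Q$); once condition (1) is reread with the letters swapped, everything is immediate.
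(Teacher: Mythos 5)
Your argument is correct and is essentially the paper's: the paper likewise identifies $P$ with $\sum_{q:Q} P_q$, notes that two inhabited fibres of the monomorphism are each equivalent to $\top$ and hence to one another, and concludes $q = q'$ by extensionality and then uniqueness of the element from subterminality of the fibre. Your handling of the swapped notation and of the terminality clause matches what the paper leaves implicit.
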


\begin{proof}
  Internally, we can think of \(P\) as \(\sum_{q : Q} P_q\). Given \(q, q'\) such that \(P_q\) and \(P_{q'}\) are both inhabited, we have \(P_q \Leftrightarrow \top \Leftrightarrow P_{q'}\) and so \(q = q'\) by extensionality. Since any two elements of \(P_q = P_{q'}\) are equal, we can deduce that \(\sum_{q : Q} P_q\) has at most one element.
\end{proof}

\begin{prop}
  \label{prop:extnmonoinpretopos}
  Suppose we have exact quotients. Then every monomorphism is a pullback of an extensional monomorphism.
\end{prop}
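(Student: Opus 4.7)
The plan is to quotient the codomain of $f$ to force extensionality, using the equivalence relation which identifies elements whose fibres are logically equivalent. Given a monomorphism $f : X \rightarrowtail Y$, I would internally regard $X$ as the subobject of $Y$ determined by the predicate $\phi(y) := \exists x : X,\, f(x) = y$. Define an equivalence relation $R \subseteq Y \times Y$ consisting of pairs $(y, y')$ with $\phi(y) \leftrightarrow \phi(y')$; this is reflexive, symmetric, and transitive because $\leftrightarrow$ on subterminals is. Using exact quotients, form the regular epimorphism $q : Y \twoheadrightarrow P := Y/R$, and let $m : Q \rightarrowtail P$ be the image of the composite $X \rightarrowtail Y \twoheadrightarrow P$ under regular factorisation in $\mathcal{E}$.

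Two things then need to be checked. First, that the resulting square with vertical sides $f$ and $m$ and horizontal sides $q$ and $X \to Q$ is a pullback: internally, the pullback of $m$ along $q$ consists of those $y : Y$ such that $\exists x : X,\, q(f(x)) = q(y)$. The equation $q(f(x)) = q(y)$ is equivalent to $(f(x), y) \in R$, i.e.\ $\phi(f(x)) \leftrightarrow \phi(y)$, which reduces to $\phi(y)$ since $\phi(f(x))$ holds trivially. So the pullback simplifies to $\{y : Y \mid \phi(y)\} \cong X$. Second, that $m$ is extensional: given $p, p' : P$ with $Q_p \leftrightarrow Q_{p'}$, lift to $y, y' : Y$ with $q(y) = p$ and $q(y') = p'$ using surjectivity of $q$. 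The same calculation yields $Q_{q(y)} \leftrightarrow \phi(y)$ and $Q_{q(y')} \leftrightarrow \phi(y')$, so $\phi(y) \leftrightarrow \phi(y')$, whence $(y, y') \in R$ and $p = q(y) = q(y') = p'$.

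The main obstacle is rigorously justifying the internal calculations above, in particular the identification $Q_{q(y)} \leftrightarrow \phi(y)$. Both this and the pullback computation hinge on effectiveness of the quotient: because $R$ is the kernel pair of $q$, elements of $P$ correspond exactly to $R$-equivalence classes of elements of $Y$, and any $R$-invariant subobject of $Y$ descends uniquely to a subobject of $P$. This is precisely what ``exact quotients'' provides; without it, the image $Q$ need not have fibres cleanly corresponding to $\phi$, and the classifying map for $f$ would be constructed without the needed uniqueness.
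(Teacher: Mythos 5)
Your proposal is correct and follows essentially the same route as the paper: quotient the codomain by the relation identifying points with logically equivalent fibres, then use exactness to descend the subobject and verify both the pullback square and extensionality. The only (immaterial) difference is that you present the descended subobject as the regular image of $X \rightarrowtail Y \twoheadrightarrow Y/R$, i.e.\ an existential, whereas the paper defines it by the universally quantified formula $\prod_{q}\bigl([q] = [q'] \rightarrow P_q\bigr)$; these coincide by exactness and surjectivity of the quotient map.
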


\begin{proof}
  Given a monomorphism \(m : P \to Q\), we define an equivalence relation on \(Q\) by setting \(q \sim q'\) whenever we have \(P_q \leftrightarrow P_{q'}\). We define \(Q'\) to be the quotient \(Q/{\sim}\). Given \(x : Q'\) we define a proposition \(P'_x\) as \(\prod_{q : Q} x = [q] \rightarrow P_q\). By exactness, we have that for \(q, q' : Q\), whenever \([q] = [q']\) we have \(P_q = P_{q'}\). It follows that for \(q : Q\), \(P_q \;\leftrightarrow\; \prod_{q' : Q} ([q] = [q'] \rightarrow P_{q'})\). From this it follows that \(m\) is the pullback of \(m' := \sum_{x : Q'} P'_x\) along the quotient map.
\end{proof}

When working with extensional monomorphisms \(P \to Q\) in the internal language, we will also write the fibre \(P_x\) as \([x]\) for \(x : Q\).

We now see the first key theorem, which shows that given an extensional monomorphism in our metatheory, the class of monomorphisms classified by the extensional monomorphism is essentially unchanged by passing into cubical sets via \(\Delta\).

\begin{lemma}
  \label{lem:natispbpoints}
  Suppose that \(f : X \to Y\) is both a monomorphism and a fibration. For any map \(s : 1 \to [n]\) in the cube category \(\square\), the naturality square below is a pullback.
  \begin{displaymath}
    \begin{tikzcd}
      X_{n} \ar[d, tail, "f_n"] \ar[r, "X_{s}"] & X_{0} \ar[d, tail, "f_0"] \\
      Y_{n} \ar[r, "Y_{s}"] & Y_{0}
    \end{tikzcd}
  \end{displaymath}
\end{lemma}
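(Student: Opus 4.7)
The plan is to translate the pullback condition into a lifting problem via Yoneda, then exploit Lemma \ref{lem:termrepristcof} together with the fact that $f$ is simultaneously a fibration and a monomorphism.

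First, I would unpack what the pullback property says at the level of elements. A pair consisting of $y \in Y_n$ and $x_0 \in X_0$ with $Y_s(y) = f_0(x_0)$ corresponds, via the Yoneda lemma, to a commutative square
\begin{displaymath}
  \begin{tikzcd}
    \yoneda [0] \ar[r, "x_0"] \ar[d, "\yoneda s"'] & X \ar[d, "f"] \\
    \yoneda [n] \ar[r, "y"'] & Y
  \end{tikzcd}
\end{displaymath}
and asking the naturality square to be a pullback amounts to asking for a unique diagonal filler $\yoneda[n] \to X$ for every such square.

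Second, existence of a filler is immediate from the hypotheses: by Lemma \ref{lem:termrepristcof}, the map $\yoneda s$ is a trivial cofibration, and $f$ is a fibration by assumption, so the definition of trivial cofibration produces a map $\yoneda [n] \to X$ making both triangles commute. Translating back through Yoneda gives the required element $x \in X_n$ with $f_n(x) = y$ and $X_s(x) = x_0$.

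Third, uniqueness follows from $f$ being a monomorphism: if $x, x' \in X_n$ both satisfy $f_n(x) = f_n(x') = y$, then since monomorphisms are injective on every component, $x = x'$, with no need even to invoke $X_s(x) = X_s(x') = x_0$.

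I do not expect any serious obstacle here; the statement is essentially a reformulation of the universal property of a pullback as a unique-lifting property, and all the work has already been done in Lemma \ref{lem:termrepristcof}. The only thing worth being careful about is keeping the Yoneda translation straight, in particular that an element $y \in Y_n$ is the same as a map $\yoneda[n] \to Y$ and that naturality of $f$ corresponds precisely to commutativity of the outer square.
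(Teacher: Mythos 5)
Your proposal is correct and follows essentially the same route as the paper: the paper also reduces the pullback condition to a lifting problem via Yoneda, obtains the filler from Lemma \ref{lem:termrepristcof} applied against the fibration $f$, and uses the monomorphism hypothesis to dispose of uniqueness (phrased there as a comparison of subobjects of $Y_n$ rather than element-wise injectivity of $f_n$). The two presentations differ only cosmetically.
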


\begin{proof}
  In any case we can construct the pullback \(s^\ast(f_0)\) as follows:
  \begin{displaymath}
    \begin{tikzcd}
      X_{n} \ar[dr, swap, tail, "f_n"] \ar[r, "X_{s}"] & s^\ast(X_{0}) \ar[r] \ar[d, tail, "s^\ast(f_0)"] \pbcorner & X_{0} \ar[d, tail, "f_0"] \\
      & Y_{n} \ar[r, "Y_{s}"] & Y_{0}
    \end{tikzcd}
  \end{displaymath}

  Since \(f_n\) and \(s^\ast(f_0)\) are both monomorphisms, and we already have a map \(X_{n} \to s^\ast(X_{0})\) over \(Y_{n}\), to show they are isomorphic it suffices to construct a map in the other direction \(s^\ast(X_{0}) \to X_{n}\) over \(Y_{n}\). Elements of \(s^\ast(X_{0})\) correspond precisely to commutative squares as in the solid lines below.
  \begin{displaymath}
    \begin{tikzcd}
      \yoneda [0] \ar[r] \ar[d, "\yoneda s"] & X \ar[d, "f"] \\
      \yoneda [n] \ar[r] \ar[ur, dotted] & Y
    \end{tikzcd}
  \end{displaymath}

  However, since \(\yoneda s\) is a trivial cofibration by Lemma \ref{lem:termrepristcof} and \(f\) is a fibration by assumption, we can choose a diagonal filler as in the dotted line above. This precisely gives us a choice of element of \(X_{n}\) in the fibre of \(y \in Y_{n}\). Putting these together gives the required map \(s^\ast(X_{0}) \to X_{n}\).
\end{proof}

\begin{lemma}
  \label{lem:natispball}
  Suppose that \(f : X \to Y\) is both a monomorphism and a fibration. For any map \(s : [n] \to [m]\) in the cube category the corresponding naturality square is a pullback.
  \begin{displaymath}
    \begin{tikzcd}
      X_{m} \ar[d, "f_m"] \ar[r, "X_{s}"] & X_{n} \ar[d, "f_n"] \\
      Y_{m} \ar[r, "Y_{s}"] & Y_{n}
    \end{tikzcd}
  \end{displaymath}
\end{lemma}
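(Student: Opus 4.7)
The plan is to reduce the general case of a map $s : [n] \to [m]$ to the already established case of a point $1 \to [k]$ (Lemma \ref{lem:natispbpoints}) by means of pullback pasting.

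First I would invoke Proposition \ref{prop:cubeshavepoints} to pick a global section $t : [0] \to [n]$. Composing with $s$ gives a second global section $s \circ t : [0] \to [m]$. Applying Lemma \ref{lem:natispbpoints} to $t$ tells us that the naturality square
\begin{displaymath}
  \begin{tikzcd}
    X_{n} \ar[d, "f_n"] \ar[r, "X_{t}"] & X_{0} \ar[d, "f_0"] \\
    Y_{n} \ar[r, "Y_{t}"] & Y_{0}
  \end{tikzcd}
\end{displaymath}
is a pullback, and applying it to $s \circ t$ tells us the same for the square with $X_{s \circ t}$ and $Y_{s \circ t}$ on top and bottom.

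Now I would stack the two naturality squares for $s$ and $t$:
\begin{displaymath}
  \begin{tikzcd}
    X_{m} \ar[d, "f_m"] \ar[r, "X_s"] & X_n \ar[d, "f_n"] \ar[r, "X_t"] & X_0 \ar[d, "f_0"] \\
    Y_{m} \ar[r, "Y_s"] & Y_n \ar[r, "Y_t"] & Y_0
  \end{tikzcd}
\end{displaymath}
Since $X$ and $Y$ are functors, the outer rectangle is exactly the naturality square for $s \circ t$, which we have just observed is a pullback. The right-hand square is a pullback by the previous paragraph. By the pullback pasting lemma, the left-hand square is then also a pullback, which is what we wanted.

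I do not expect any real obstacle here: the only nontrivial input is the existence of a global section of $[n]$, which is given by Proposition \ref{prop:cubeshavepoints}, and the argument is otherwise formal. One small sanity check worth flagging is that the construction does not depend on the choice of $t$, which is automatic since pullbacks are unique up to canonical isomorphism.
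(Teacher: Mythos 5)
Your proof is correct and is essentially identical to the paper's: both choose a global section \(t : [0] \to [n]\) via Proposition \ref{prop:cubeshavepoints}, apply Lemma \ref{lem:natispbpoints} to \(t\) and to \(s \circ t\) to see that the right-hand square and the outer rectangle are pullbacks, and conclude by pullback pasting.
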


\begin{proof}
  By Proposition \ref{prop:cubeshavepoints} we have some map \(t : 1 \to [n]\). We can therefore extend the diagram above as follows.
  \begin{displaymath}
    \begin{tikzcd}
      X_{m} \ar[d, "f_m"] \ar[r, "X_{s}"] & X_{n} \ar[d, "f_n"] \ar[r, "X_{t}"] & X_{0} \ar[d, "f_0"] \\
      Y_{m} \ar[r, "Y_{s}"] & Y_{n} \ar[r, "Y_{t}"] & Y_{0}
    \end{tikzcd}
  \end{displaymath}

  By Lemma \ref{lem:natispbpoints} both the right hand square and the whole rectangle are pullbacks. Hence the left hand square is also a pullback.
\end{proof}

\begin{thm}
  \label{thm:internaliseclassifier}
  Suppose that \(f : X \to Y\) is a map in cubical sets that is both a fibration and a monomorphism, and that \(\Gamma(f) : \Gamma(X) \to \Gamma(Y)\) is a pullback of an extensional monomorphism \(g : P \to Q\). Then \(f\) is a pullback of \(\Delta(g) : \Delta(P) \to \Delta(Q)\).
\end{thm}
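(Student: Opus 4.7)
The plan is to construct a classifying natural transformation $\bar{\chi} : Y \to \Delta P$ by defining its components $\bar{\chi}_n : Y_n \to P$ separately at each dimension, and then to deduce that $f$ is the pullback of $\Delta g$ along $\bar{\chi}$ because pullbacks in $\widehat{\square}$ are computed levelwise.

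Write $\chi : \Gamma Y \to P$ for the classifying map witnessing $\Gamma f$ as a pullback of $g$ (which is unique by extensionality of $g$). For each $n$, the map $f_n : X_n \to Y_n$ is again a monomorphism, and Proposition \ref{prop:cubeshavepoints} gives at least one point $t : [0] \to [n]$. By Lemma \ref{lem:natispbpoints} the naturality square for $Y_t$ is a pullback, so $f_n$ is a pullback of $f_0 = \Gamma f$ along $Y_t$. Pasting with the pullback exhibiting $\Gamma f$ as a pullback of $g$ along $\chi$ shows that $f_n$ is a pullback of $g$ along $\chi \circ Y_t : Y_n \to P$, and I set $\bar{\chi}_n := \chi \circ Y_t$.

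The heart of the proof is naturality: for an arbitrary $s : [m] \to [n]$ in $\square$, I need $\bar{\chi}_m \circ Y_s = \bar{\chi}_n$. By Lemma \ref{lem:natispball}, $f_n$ is the pullback of $f_m$ along $Y_s$; pasting with the pullback square for $\bar{\chi}_m$ exhibits $\bar{\chi}_m \circ Y_s$ as a second classifying map for $f_n$ as a pullback of $g$. Extensionality of $g$ forces these to agree, so $\bar{\chi}_m \circ Y_s = \bar{\chi}_n$. The same uniqueness argument shows that $\bar{\chi}_n$ does not depend on the chosen $t : [0] \to [n]$, so the $\bar{\chi}_n$ assemble into a well-defined morphism $\bar{\chi} : Y \to \Delta P$ in $\widehat{\square}$.

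Finally, at each dimension the square with bottom edge $\bar{\chi}_n$ and right edge $\Delta(g)_n = g$ is, by construction, a pullback with apex $X_n$; these squares are already natural in $n$ because $\bar{\chi}$ is, so since limits in $\widehat{\square}$ are computed pointwise, $f$ is the pullback of $\Delta g$ along $\bar{\chi}$. I expect the only nontrivial step to be the uniqueness-driven naturality argument, which is really where the extensionality hypothesis is doing the work; all the remaining content is pullback pasting combined with the earlier lemmas.
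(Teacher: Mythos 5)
Your proposal is correct and follows essentially the same route as the paper: use Proposition \ref{prop:cubeshavepoints} and Lemma \ref{lem:natispbpoints} to exhibit each $f_n$ as a pullback of $g$, let extensionality provide the (unique, hence well-defined) classifying maps $\chi_n$, derive naturality from Lemma \ref{lem:natispball} plus uniqueness of classifying maps, and conclude via pointwise pullbacks. The only cosmetic difference is that you give the explicit formula $\chi \circ Y_t$ and then observe independence of $t$, whereas the paper invokes uniqueness directly.
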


\begin{proof}
  First note that by Proposition \ref{prop:cubeshavepoints} and Lemma \ref{lem:natispbpoints}, for each \(n\) the map \(f_n : X_{n} \to Y_{n}\) is a pullback of \(f_0 : X_{0} \to Y_{0}\) along some map, and so a pullback of \(g\). By extensionality, this determines a unique map \(\chi_n : Y_{n} \to Q\). To show this gives a morphism of cubical sets, we need to check naturality, which amounts to the following commutative triangles for each \(s : [n] \to [m]\).
  \begin{displaymath}
    \begin{tikzcd}
      Y_m \ar[d, swap, "Y_{s}"] \ar[r, "\chi_m"] & Q \\
      Y_n \ar[ur, swap, "\chi_n"] &
    \end{tikzcd}
  \end{displaymath}

  However, note that \(\chi_n \circ Y_s\) is a classifying map for \(f_m\), by observing that both squares in the diagram below are pullbacks; the left hand square by Lemma ~\ref{lem:natispball} and the right hand square by the definition of \(\chi_n\).  Since \(\chi_m\) is also a classifying map for \(f_m\) we have \(\chi_n \circ Y_s = \chi_m\) by extensionality.
  \begin{displaymath}
    \begin{tikzcd}
      X_m \ar[r, "X_s"] \ar[d] & X_n \ar[r] \ar[d] & P \ar[d] \\
      Y_m \ar[r, "Y_s"] & Y_n \ar[r, "\chi_n"] & Q
    \end{tikzcd}
  \end{displaymath}

  Finally, since pullbacks are computed pointwise, it is clear from the definition of \(\chi_n\) that \(f\) is the pullback of \(\Delta(g)\) along \(\chi\).
\end{proof}

\begin{rmk}
  \label{rmk:sprop1}
  Fibrations which are also monomorphisms can be understood syntactically by augmenting type theory with a universe of strict propositions \cite[Section 4.3]{defproofirr}. As a consequence of Theorem ~\ref{thm:internaliseclassifier}, when cubical sets are constructed in the internal language of a topos we can interpret the universe of strict propositions as \(\Delta(\Omega)\).
\end{rmk}

\section{\texorpdfstring{\(\neg\neg\)-Stable}{Double negation stable} h-propositions}
\label{sec:negneg-stable-h}

We have seen so far that in general it is best to view h-propositions not as types with ``at most one point'' but rather as types with many points that are all joined together by paths. Next, we saw in Section~\ref{sec:intern-class-monom} that the subclass of monic fibrations, which are h-propositions that really do have at most one point, is well behaved and corresponds closely to monomorphisms in sets.

We now restrict to the subclass of \(\neg\neg\)-stable h-propositions. The main motivation for doing this is that, as we will see, \(\neg\neg\)-stable h-propositions are necessarily monomorphisms up to equivalence, allowing us to apply the results from Section~\ref{sec:intern-class-monom}.

In contrast to the class of all monic fibrations, we have a clear definition of which h-propositions are \(\neg\neg\)-stable inside type theory.
As a consequence of this, we can define classes both of \(\neg\neg\)-stable propositions in sets and of \(\neg\neg\)-stable h-propositions in cubical sets.
Hence we can compare these two classes, and we will see that in fact they are closely related.
In particular, given a classifier for \(\neg\neg\)-stable propositions in sets, say \(\Omega_{\neg\neg}\), we can obtain a classifier of \(\neg\neg\)-stable h-propositions in cubical sets simply as the constant cubical set \(\Delta(\Omega_{\neg\neg})\).

Although the class of \(\neg\neg\)-stable h-propositions is a somewhat restricted class compared to the class of all h-propositions, it suffices for some key constructions. For this paper these applications are constructing the Dedekind real numbers and defining extended Church's thesis. The latter is related to the fact that \(\neg\neg\)-stable propositions play an important role in realizability models, and e.g. appear frequently in \cite{vanoosten}.

Again, we will not need all of the properties of cubical sets. The results of this section will hold for any category of internal presheaves in a locally cartesian closed category equipped with a weak factorisation system, such that in addition to the properties from Section~\ref{sec:intern-class-monom} we have the following.
\begin{enumerate}
  \item Dependent products preserve fibrations.
  \item For any \(Y\), the unique map \(\bot \to Y\) is a fibration.
\end{enumerate}

Throughout this section, for a given map \(X \to Y\) we write \(\neg X\) to mean the negation of \(X\) computed in the slice category over \(Y\), i.e. functions to \(\bot\) using the local exponential over \(Y\).

\begin{lemma}
  \label{lem:negismono}
  For any map \(f : X \to Y\), the negation \(f' : \neg X \to Y\) is a monomorphism. If \(f\) is a fibration, then so is \(f'\).
\end{lemma}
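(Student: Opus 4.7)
The plan is to handle the two claims separately, each by direct appeal to the basic facts listed in Section~\ref{sec:review-cubical-sets}.

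For the monomorphism part, I would check that the fibers of $f'$ are subterminal, arguing in the internal language of the topos $\cubset$. A point of $\neg X$ over $y : Y$ is, by definition of the local exponential, a function $\phi : X_y \to 0$. Given two such $\phi, \psi$, the mere existence of one forces $X_y$ to be empty: an element $x : X_y$ would produce $\phi(x) : 0$, which is impossible since the initial object is strict in any cartesian closed category. So $\phi$ and $\psi$ have empty domain and agree vacuously by function extensionality. Hence the fibers of $f'$ are subsingletons, i.e.\ $f'$ is a monomorphism. Note that this part of the argument uses no hypothesis on $f$ at all.

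For the fibration part, I would rewrite $\neg X \to Y$ as a dependent product and invoke the listed closure properties. The local exponential $\bot^X$ in $\cubset/Y$ is $\Pi_f(f^* \bot)$, and since pullback preserves colimits it sends $\bot = (0 \to Y)$ to the initial bundle $0 \to X$. Now $0 \to 1$ is a fibration because the initial object is fibrant (item~3 in the list of basic facts about cubical sets); hence its pullback $0 \to X$ along $X \to 1$ is a fibration (item~2); and the dependent product of this fibration along the fibration $f$ is again a fibration (item~5). So $f' \cong \Pi_f(0 \to X) : \neg X \to Y$ is a fibration.

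I do not anticipate any serious obstacle; both parts are short once the right reformulation is in hand. The only piece of bookkeeping worth checking carefully is the identification $\neg X \cong \Pi_f(0 \to X)$, but this is just the standard LCCC formula for a local exponential whose codomain is the initial object of the slice.
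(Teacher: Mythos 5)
Your proof is correct and takes essentially the same route as the paper: the fibration part is identical (identify \(\neg X \to Y\) with \(\Pi_f\) of the pullback of the fibration \(0 \to 1\), then use closure of fibrations under pullback and dependent product). For the monomorphism part the paper instead observes that the local exponential \((-)^X\) preserves limits and hence subterminal objects, so \(\bot^X\) is subterminal because \(\bot \to Y\) is; your internal-language ex falso argument establishes the same fact and needs no hypothesis on \(f\), just as the paper's does not.
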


\begin{proof}
  The exponential \((-)^X\) in \(\cubset/Y\) preserves limits and so in particular preserves subterminals. Since \(\bot \to Y\) is subterminal in \(\cubset/Y\), so is \(\neg X = \bot^X\).

  To show that \(f'\) is a fibration, we recall that the list of basic facts about cubical sets in Section~\ref{sec:review-cubical-sets} included the facts that the initial object is fibrant and dependent products preserve fibrations. The map \(\bot \to Y\) is the pullback of \(0 \to 1\) along the unique map \(Y \to 1\), and so also a fibration. The local exponential can be constructed using dependent product and pullback, and so also preserves fibrations. These two together suffice to show \(f' : \neg X \to Y\) is a fibration.
\end{proof}

\begin{lemma}
  \label{lem:negpointspointsneg}
  For any map \(f : X \to Y\), we have an isomorphism between \((\neg X)_0\) and \(\neg X_0\) as subobjects of \(Y_0\).
\end{lemma}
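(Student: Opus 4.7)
The plan is to unfold both claimed subobjects of $Y_0$ and verify they are equal as subsets. The subobject $(\neg X)_0$ comes from the standard description of the local exponential in a presheaf topos: $(\neg X)_n$ is the set of those $y \in Y_n$ such that the pullback $y^\ast X$, viewed as a cubical set over $\yoneda[n]$, admits a map to $0$, equivalently is empty at every level. Specialising to $n = 0$ gives $(\neg X)_0 = \{ y \in Y_0 : (y^\ast X)_m = \emptyset \text{ for all } m \in \NN \}$. The subobject $\neg X_0$, computed in \textbf{Set} from the map $f_0 : X_0 \to Y_0$, is simply $\{y \in Y_0 : f_0^{-1}(y) = \emptyset\}$.

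One containment is then immediate: the $m = 0$ clause in the description of $(\neg X)_0$ already says $f_0^{-1}(y) = (y^\ast X)_0$ is empty, since $[0]$ is terminal. The reverse containment is the substantive direction. Given $y \in Y_0$ with empty fibre, and any hypothetical $x \in (y^\ast X)_n$, I would invoke Proposition \ref{prop:cubeshavepoints} to choose a global point $t : [0] \to [n]$, and then observe by naturality that $X_t(x) \in X_0$ lies over $y$ (using that $!_n \circ t = \mathrm{id}_{[0]}$), contradicting the hypothesis that the fibre is empty.

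The only real obstacle is correctly identifying the pointwise formula for $\neg X$ in a slice of a presheaf topos; once that is in hand the argument is a short naturality check whose essential ingredient is Proposition \ref{prop:cubeshavepoints}, which transports information about higher cubes down to vertices. No fibrancy hypothesis on $f$ is needed for this lemma.
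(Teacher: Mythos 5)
Your proposal is correct and follows essentially the same route as the paper: both identify $\neg X_0$ with the set of $y$ having empty fibre under $f_0$, and both establish the substantive containment by using Proposition \ref{prop:cubeshavepoints} to map a hypothetical $n$-cell of $X$ over $y$ down to a $0$-cell in the fibre of $y$, yielding a contradiction. Your observation that no fibrancy hypothesis is needed also matches the paper, whose proof of this lemma never invokes a fibration structure on $f$.
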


\begin{proof}
  Since these are both subobjects of \(Y_0\), to show they are isomorphic, it suffices to show they are logically equivalent over \(Y_0\).

  We first construct the map \((\neg X)_0 \to \neg X_0\) over \(Y_0\). By the adjunction between products and exponentials in \(\sets/Y_0\), it suffices to construct a map \(X_0 \times (\neg X)_0 \to \bot\). However, this can be obtained by simply applying \(\Gamma\) to the evaluation map \(X \times (\neg X) \to \bot\).

  We now construct the map \(\neg X_0 \to (\neg X)_0\) over \(Y_0\). We can explicitly describe \(\neg X_0\) as the subobject of \(Y_{0}\) consisting of \(y \in Y_0\) such that the fibre \(f_0^{-1}(y)\) is empty. Using this, we need to find a (necessarily unique) global section of \(\neg X\) over \(y\). This is the same as constructing a map \(\ulcorner y \urcorner^\ast X \to \bot\), where \(\ulcorner y \urcorner : 1 \to Y\) corresponds to \(y \in Y_0\) under the Yoneda equivalence. That is, for each \(n \in \square\) we need to derive a contradiction from the existence of \(x \in X_n\) such that \(f_n(x) = Y_!(y)\), where \(!\) is the unique map \([n] \to [0]\). However, for any \(n\) we can find a map \(s : [0] \to [n]\) by Proposition~\ref{prop:cubeshavepoints}, and so given any such \(x\) produce an element \(X_s(x)\) of \(X_0\), which must lie in the fibre of \(y\) since \(! \circ s\) is necessarily the identity on \([0]\).
\end{proof}

\begin{thm}
  \label{thm:negnegstableclassifier}
  Suppose \(m : 1 \to \Omega_{\neg \neg}\) is a classifier for all \(\neg \neg\)-stable propositions. Then \(\Delta(m) : \Delta(1) \to \Delta(\Omega_{\neg \neg})\) is a homotopy classifier for all \(\neg \neg\)-stable h-propositions in cubical sets.
\end{thm}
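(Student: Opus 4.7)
The plan is to reduce the statement to the strict classification result Theorem~\ref{thm:internaliseclassifier}, applied not directly to the given $\neg\neg$-stable h-proposition $f : X \to Y$ but to its double negation $\neg\neg f : \neg\neg X \to Y$. The key point is that $\neg\neg f$ is automatically a monomorphism and a fibration by Lemma~\ref{lem:negismono}, while the $\neg\neg$-stability assumption on $f$ will let us recover $f$ from $\neg\neg f$ up to fiberwise homotopy equivalence, which is the right notion for a homotopy classifier. Note that in general a $\neg\neg$-stable h-proposition need not itself be a monomorphism (a contractible Kan complex with two distinct $0$-cells is a counterexample over $Y = 1$), so one cannot hope for a strict pullback along $\chi$ and the equivalence step really is needed.

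First I would check that $\Delta m$ has the properties expected of a classifier: it is a monomorphism because $\Delta$ preserves limits, a fibration because any map between constant cubical sets is one, and $\neg\neg$-stable as an h-proposition because $\Delta$ preserves finite limits and dependent products and so transports the stability witness for $m$ from the metatheory. Then, given a $\neg\neg$-stable h-proposition fibration $f : X \to Y$, Lemma~\ref{lem:negpointspointsneg} applied twice identifies $\Gamma(\neg\neg X)$ with $\neg\neg \Gamma X$ as a subobject of $\Gamma Y$. The latter is a $\neg\neg$-stable proposition in the metatheory and is therefore uniquely classified by $m$, so Theorem~\ref{thm:internaliseclassifier} applied to $\neg\neg f$ (with $g = m$) yields a unique $\chi : Y \to \Delta \Omega_{\neg\neg}$ along which $\neg\neg f$ is a strict pullback of $\Delta m$.

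It remains to exhibit an equivalence $f \simeq \neg\neg f$ over $Y$. The canonical inclusion $i : X \to \neg\neg X$ and a map $r : \neg\neg X \to X$ interpreting the HoTT term for $\neg\neg$-stability are maps over $Y$ in both directions. Because $\neg\neg X \to Y$ is a monomorphism, any two parallel maps $\neg\neg X \to \neg\neg X$ over $Y$ coincide, so $i \circ r = \mathrm{id}_{\neg\neg X}$ strictly. For $r \circ i$ versus $\mathrm{id}_X$, I would pair them into a map $X \to X \times_Y X$ over $Y$ and post-compose with the section of $\operatorname{Path}_Y(X) \to X \times_Y X$ supplied by the h-proposition structure on $f$; this produces the required fiberwise homotopy, so $i$ is a homotopy equivalence over $Y$ with inverse $r$.

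I expect the main obstacle to be a clean treatment of uniqueness of $\chi$. The plan there is: if $\chi, \chi' : Y \to \Delta \Omega_{\neg\neg}$ both classify $f$ up to equivalence, then $\chi^*(\Delta m)$ and ${\chi'}^*(\Delta m)$ are monomorphisms into $Y$ which are homotopy equivalent to $f$ and so to each other. A homotopy equivalence between two monomorphisms over $Y$ is automatically an isomorphism of subobjects (any two maps with common target a subobject of $Y$ agree over $Y$), so these two pullbacks agree strictly, and then extensionality of $\Delta m$, inherited from that of $m$ via $\Delta$, forces $\chi = \chi'$. The most delicate piece overall is probably pinning down the precise sense of ``homotopy classifier'' being claimed and checking carefully that the stability map $r$ really assembles as a morphism of cubical sets over $Y$ from the HoTT term of type $\prod_{y:Y} (\neg\neg X_y \to X_y)$.
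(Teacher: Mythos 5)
Your proposal is correct and follows essentially the same route as the paper: replace \(f\) by its double negation \(\neg\neg f\), which is a monomorphism and fibration by Lemma~\ref{lem:negismono}, use Lemma~\ref{lem:negpointspointsneg} to see that \(\Gamma(\neg\neg f)\) is a \(\neg\neg\)-stable monomorphism in the metatheory and hence classified by \(m\), apply Theorem~\ref{thm:internaliseclassifier}, and transfer back along the equivalence \(f \simeq \neg\neg f\). The extra detail you supply on the fiberwise equivalence \(i, r\) and on uniqueness of \(\chi\) is sound and merely makes explicit what the paper leaves implicit.
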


\begin{proof}
  Suppose that \(f : X \to Y\) is a \(\neg \neg\)-stable h-proposition. This implies that \(f\) is equivalent to its double negation, which we will write as \(f' : \neg \neg X \to Y\). By Lemma ~\ref{lem:negismono} \(f'\) is a monomorphism. By Lemma ~\ref{lem:negpointspointsneg} \(f'_0 : (\neg \neg X)_0 \to Y_0\) is equivalent to \(\neg (\neg X)_0\) and so also \(\neg \neg\)-stable. Hence \(f'_0\) is a pullback of \(m : 1 \to \Omega_{\neg \neg}\) and so by Theorem ~\ref{thm:internaliseclassifier} \(f'\) is a pullback of \(\Delta(m) : \Delta(1) \to \Delta(\Omega_{\neg \neg})\). Since \(f\) is equivalent to \(f'\) it is therefore a homotopy pullback of \(\Delta(m) : \Delta(1) \to \Delta(\Omega_{\neg \neg})\).

  Finally, to show \(\Delta(1) \to \Delta(\Omega_{\neg \neg})\) classifies the \(\neg \neg\)-stable h-propositions exactly, we need to verify that it is \(\neg \neg\)-stable itself. However, this follows from the fact that \(\Delta\) preserves initial object and dependent products, and so preserves double negation.
\end{proof}

There are many situations where we have access to classifiers for \(\neg \neg\)-stable propositions, the most important being the following.

\begin{example}
  Any topos has a classifier for all \(\neg \neg\)-stable propositions by defining it as the obvious subobject of the subobject classifier.
\end{example}

\begin{example}
  A category of assemblies has a classifier for \(\neg \neg\)-stable propositions, assuming they were constructed in a metatheory with the same. Namely, if \(\Omega_{\neg \neg}\) is a classifier for \(\neg \neg\)-stable propositions in sets, we define an assembly with underlying set \(\Omega_{\neg \neg}\) and uniform realizability predicate \(E(p) := \{0\}\). Every \(\neg \neg\)-stable monomorphism is a uniform map in the sense of \cite[Section 3.4]{vanoosten}. In this way, we can think of \(\neg \neg\)-stable h-propositions in cubical assemblies as proof irrelevant on two different levels. By Lemma ~\ref{lem:negismono} they are monomorphisms, and so types where any two elements are strictly equal. However, in addition their underlying monomorphism in assemblies is also \(\neg \neg\)-stable by Lemma ~\ref{lem:negpointspointsneg}, and so uniform, which can be seen as a form of proof irrelevance inherent to categories of assemblies and other realizability models.
\end{example}

\begin{rmk}
  To follow up on Remark ~\ref{rmk:sprop1}, as an alternative to interpreting strict propositions as the collection of all monomorphisms, we could instead restrict to only \(\neg \neg\)-stable propositions, and in particular interpret the universe of strict propositions as \(\Delta(\Omega_{\neg \neg})\).
\end{rmk}

\section{The Dedekind real numbers}
\label{sec:two-defin-dedek}

We recall, e.g. from \cite[Section 3.6]{aczelrathjen} that the Dedekind reals can be defined constructively using the notion of \emph{left cut}, as given below.

\begin{defn}
  A \emph{Dedekind left cut} is a set \(L \subseteq \QQ\) satisfying the following properties:
  \begin{enumerate}
  \item (Boundedness) There exist rational numbers \(a \in L\) and \(b \notin L\).
  \item (Openness) For all \(a \in L\) there merely exists \(b \in L\) such that \(b > a\).
  \item (Locatedness) For all \(a < b \in \QQ\) either \(a \in L\) or \(b \notin L\).
  \end{enumerate}
\end{defn}

\begin{rmk}
  \label{rmk:locatedimpliesdownclosed}
  It follows from locatedness that \(L\) is downwards closed.
\end{rmk}

In this paper we will, however, not use left cuts directly, but instead a variant that we refer to as \emph{cocut}. The reason for this is that it will turn out that cocuts are \(\neg \neg\)-stable as subsets of \(\QQ\), allowing us to apply the results of Section~\ref{sec:negneg-stable-h}.

\begin{defn}
  A \emph{co-left cut} or just a \emph{cocut} is a set \(C \subseteq \QQ\) satisfying the following properties:
  \begin{enumerate}
  \item (Boundedness) There exist rational numbers \(a \notin C\) and \(b \in C\).
  \item (Closedness) For all \(a \in \QQ\), if \(b \in C\) for all \(b > a\), then \(a \in C\).
  \item (Locatedness) For all \(a < b \in \QQ\) either \(a \notin C\) or \(b \in C\).
  \end{enumerate}
\end{defn}
As before, note that any cocut is upwards closed by locatedness.

We can translate between the two definitions using the following operations. For each \(L \subseteq \QQ\) we define \(\neg L\) and \(L^<\) as follows:
\begin{align*}
  \neg L &:= \QQ \setminus L \\
  L^< &:= \{ a \in \QQ \;|\; \existsq{b \in L}{a < b} \}
\end{align*}

\begin{prop}
  \label{prop:cutcocut}
  \mbox{}
  \begin{enumerate}
  \item If \(L \subseteq \QQ\) is a left cut then \(\neg L\) is a cocut, and \(L = (\neg \neg L)^<\).
  \item If \(C \subseteq \QQ\) is a cocut then \((\neg C)^<\) is a left cut and \(C = \neg ((\neg C)^<)\).
  \end{enumerate}
\end{prop}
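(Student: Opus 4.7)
The plan is to check the three cocut/left-cut axioms in each direction and then verify the two round-trip identities, relying crucially on one auxiliary observation. That observation is the \emph{upper-closure lemma} for cocuts: if $b \in C$ and $d > b$, then $d \in C$. This follows from locatedness applied to $b < d$: the alternative $b \notin C$ contradicts $b \in C$, so $d \in C$ falls out either directly or by ex falso. The dual downward-closure remark for left cuts is already recorded as Remark \ref{rmk:locatedimpliesdownclosed}.

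For part (1), I would verify that $\neg L$ is a cocut in three steps. Boundedness just swaps the two witnesses from $L$. Locatedness transcribes directly, since $a \notin \neg L \vee b \in \neg L$ is literally $a \in L \vee b \notin L$. Closedness at $a$ asserts that if every $b > a$ lies in $\neg L$ then $a \notin L$; I would prove this by contradiction, using openness of $L$ to produce, from $a \in L$, some $b > a$ with $b \in L$, contradicting $b \in \neg L$. For the identity $L = (\neg \neg L)^<$, the forward inclusion uses openness together with the general fact $L \subseteq \neg \neg L$, while the reverse inclusion uses locatedness of $L$ on the witness $a < b$, $b \in \neg \neg L$: the alternative $b \notin L$ is ruled out because it would put $b$ in $\neg L$.

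For part (2), openness of $(\neg C)^<$ is immediate via a midpoint. Locatedness at $a < b$: choose $c$ strictly between and apply locatedness of $C$ to $c < b$; the case $c \notin C$ places $a$ in $(\neg C)^<$, and the case $b \in C$ forces $b \notin (\neg C)^<$ by the upper-closure lemma. Boundedness uses the same lemma to produce a rational outside $(\neg C)^<$. For $C = \neg((\neg C)^<)$, the inclusion $C \subseteq \neg((\neg C)^<)$ is again the upper-closure lemma rephrased. For the converse, given $q \notin (\neg C)^<$, I would verify the hypothesis of closedness of $C$ pointwise: for each $d > q$ insert $d'$ with $q < d' < d$ and apply locatedness of $C$ to $d' < d$; the case $d' \notin C$ would place $q$ in $(\neg C)^<$, which is excluded by assumption, so $d \in C$, and closedness of $C$ then yields $q \in C$.

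There is no deep obstacle, but the step that most deserves care constructively is the identity $L = (\neg\neg L)^<$: at first sight it looks like it should need double-negation elimination, but in fact locatedness of the cut resolves the necessary disjunction on strictly ordered pairs, which substitutes for classical decidability at exactly the right place.
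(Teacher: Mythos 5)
Your proposal is correct and follows essentially the same route as the paper: the same midpoint applications of locatedness in part (2), the same use of openness plus $L \subseteq \neg\neg L$ for $L \subseteq (\neg\neg L)^<$, and the same contradiction via locatedness for the reverse inclusion. The only cosmetic difference is that you isolate upward closure of cocuts as an explicit lemma, which the paper records as a one-line remark immediately after the definition.
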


\begin{proof}
  Suppose first that \(L\) is a left cut. It is clear that \(\neg L\) is bounded, and closedness and locatedness of \(\neg L\) easily follow from openness and locatedness of \(L\) respectively. Given \(a \in L\), there exists \(b \in L\) such that \(a < b\). We then have \(b \in \neg \neg L\), and so \(a \in (\neg \neg L)^<\). Hence \(L \subseteq (\neg \neg L)^<\). Given \(a \in (\neg \neg L)^<\), there exists \(b \in \neg \neg L\) such that \(a < b\). By locatedness, either \(a \in L\) or \(b \notin L\). However, the latter contradicts \(b \in \neg \neg L\), and so we have \(a \in L\), giving \((\neg \neg L)^< \subseteq L\).

  Now suppose that \(C\) is a cocut. Note that \((\neg C)^<\) is open by definition, and it is bounded by the boundedness of \(C\). To check locatedness, suppose we are given \(a < b \in \QQ\). By locatedness of \(C\) we know that either \(\frac{a + b}{2} \notin C\) or \(b \in C\). The former implies \(a \in (\neg C)^<\) and the latter implies \(b \notin (\neg C)^<\).

  To check that \(C \subseteq \neg((\neg C)^<)\), suppose \(a \in C\). To show \(a \in \neg((\neg C)^<)\), we need to derive a contradiction from the assumption \(a \in (\neg C)^<\), so suppose there is \(b > a\) such that \(b \notin C\). However, since \(C\) is upwards closed, this contradicts \(a \in C\), as required. We now check that \(\neg((\neg C)^<) \subseteq C\). Suppose that \(a \in \neg((\neg C)^<)\). To show \(a \in C\), it suffices, by closedness, to check that for all \(b > a\), \(b \in C\). For any \(b > a\), we have by locatedness that either \(\frac{a + b}{2} \notin C\) or \(b \in C\). The former implies that \(a \in (\neg C)^<\), contradicting \(a \in \neg((\neg C)^<)\), and so we must have \(b \in C\), as required.
\end{proof}

\begin{prop}
  \label{prop:closednessaltdef}
  Let \(C\) be a cocut. Then \(a \in C\) if and only if for all \(n \in \NN\), \(a + \frac{1}{n + 1} \in C\).
\end{prop}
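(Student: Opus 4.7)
The plan is to handle the two directions separately, with the forward direction being immediate from upward closedness and the backward direction being a straightforward application of closedness combined with the Archimedean property of the rationals.

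For the forward direction, suppose $a \in C$. Fix $n \in \NN$ (with $n \geq 1$ so that $\tfrac{1}{n}$ is defined). Then $a + \tfrac{1}{n} > a$, and since cocuts are upwards closed (as remarked after the definition), we conclude $a + \tfrac{1}{n} \in C$.

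For the backward direction, assume $a + \tfrac{1}{n} \in C$ for every $n \geq 1$. By closedness, to show $a \in C$ it suffices to verify $b \in C$ for every rational $b > a$. Given such a $b$, use the Archimedean property of $\QQ$ to pick $n \geq 1$ with $\tfrac{1}{n} < b - a$, so that $a < a + \tfrac{1}{n} < b$. By hypothesis $a + \tfrac{1}{n} \in C$, and then upward closedness of $C$ gives $b \in C$, completing the argument.

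There is no real obstacle here; the only subtlety is the convention on $\NN$, which I would resolve by treating the quantifier as ranging over positive naturals (or equivalently noting that the $n = 0$ case is vacuous once one fixes a convention for $\tfrac{1}{0}$). Both directions ultimately rest on facts already established: upward closedness from locatedness, and the definitional closedness property of cocuts.
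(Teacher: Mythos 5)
Your proof is correct and follows essentially the same argument as the paper's: the forward direction from upward closedness, and the backward direction by applying closedness and choosing $n$ with $a < a + \tfrac{1}{n} < b$. The aside about the $n = 0$ convention is a reasonable clarification but does not change the substance.
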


\begin{proof}
  The implication \((\Rightarrow)\) follows from the fact that \(C\) is upwards closed.

  It remains to check the implication \((\Leftarrow)\). Suppose that for all \(n \in \NN\), \(a + \frac{1}{n + 1} \in C\). To show \(a \in C\) it suffices by closedness to show that \(b \in C\) for all \(b > a\). For any \(b > a\) we can find \(n\) such that \(a < a + \frac{1}{n + 1} < b\). By assumption \(a + \frac{1}{n + 1} \in C\), and so \(b \in C\) since \(C\) is upwards closed.
\end{proof}

\begin{cor}
  If we have a classifier for \(\neg \neg\)-stable propositions in the metatheory, then there is a type of all Dedekind real numbers in cubical sets.
\end{cor}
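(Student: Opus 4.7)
The plan is to reify the Dedekind reals in cubical sets as the subtype of $\neg \neg$-stable subsets of $\QQ$ satisfying the cocut axioms. Theorem \ref{thm:negnegstableclassifier} provides, from the hypothesised metatheoretic classifier, a classifier $\Delta(\Omega_{\neg \neg})$ for $\neg \neg$-stable h-propositions inside cubical sets. The rationals are represented by the constant cubical set $\Delta(\QQ)$, so the exponential $\Delta(\Omega_{\neg \neg})^{\Delta(\QQ)}$ is the type of $\neg \neg$-stable subsets of $\QQ$. The Dedekind reals are then carved out as the subtype of those subsets satisfying boundedness, closedness and locatedness.

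The substantive step is to verify that every cocut, viewed as a map $\QQ \to \hprop$, factors through $\Omega_{\neg \neg}$; that is, that membership in any cocut is itself a $\neg \neg$-stable proposition. Fix a cocut $C$ and $a \in \QQ$ with $\neg \neg (a \in C)$; I must deduce $a \in C$. By Proposition \ref{prop:closednessaltdef} it suffices to show $a + \frac{1}{n} \in C$ for every $n \in \NN$. Applying locatedness to $a < a + \frac{1}{n}$ yields the truncated disjunction $(a \notin C) \vee (a + \frac{1}{n} \in C)$; since the goal is itself an h-proposition I may eliminate the truncation and case-split. The first disjunct contradicts $\neg \neg (a \in C)$, so the second disjunct must hold, giving exactly what is needed. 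Thus every cocut is a $\neg \neg$-stable subset.

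With this in hand the rest is bookkeeping. Boundedness is an existential statement realised using the propositional truncation operator available in cubical sets; closedness and locatedness are universally quantified h-propositions. All three predicates are h-propositions, so the subtype of cocuts inside $\Delta(\Omega_{\neg \neg})^{\Delta(\QQ)}$ is a well-defined type in cubical sets. The translations $L \mapsto \neg L$ and $C \mapsto (\neg C)^<$ established before Proposition \ref{prop:closednessaltdef} identify cocuts with Dedekind left cuts, so this type really is the type of Dedekind reals.

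The main obstacle is the $\neg \neg$-stability argument above, which crucially leverages both closedness (in the form of Proposition \ref{prop:closednessaltdef}) and locatedness. Everything else is immediate from Theorem \ref{thm:negnegstableclassifier} together with the standard closure properties of cubical sets under dependent products, exponentials, and propositional truncation.
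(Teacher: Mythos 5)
Your proof is correct and follows essentially the same route as the paper: show that membership in any cocut is a \(\neg\neg\)-stable proposition, then apply Theorem~\ref{thm:negnegstableclassifier} to carve the cocuts out of \(\Delta(\Omega_{\neg\neg})^{\Delta(\QQ)}\). The only cosmetic difference is that the paper obtains \(\neg\neg\)-stability by observing that \(C = \neg((\neg C)^<)\) is literally a negation, whereas you re-derive it directly from locatedness and Proposition~\ref{prop:closednessaltdef}; the two arguments have the same mathematical content.
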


\begin{proof}
  By Proposition~\ref{prop:cutcocut}, every cocut \(C\) is equivalent to \(\neg ((\neg C)^<)\) and so \(\neg \neg\)-stable. Hence we can construct the collection of all cocuts using the classifier for \(\neg \neg\)-stable h-propositions given in Theorem~\ref{thm:negnegstableclassifier}.
\end{proof}

\section{A model of extended Church's thesis}
\label{sec:model-extend-churchs}

\newcommand{\partfn}{\operatorname{Partial}_{\neg \neg}}

As stated in the conclusion to \cite{uemuraswan}, the main barrier to finding a model of extended Church's thesis was finding a good way to formulate partial functions within cubical sets. However, \(\neg\neg\)-stable propositions suffice for stating a formulation of extended Church's thesis for partial functions based on the axiom \(\mathbf{ECT}'_0\) appearing in \cite[Chapter 4, Section 5.5]{troelstravandalen}. Namely, given a classifier for \(\neg\neg\)-stable h-propositions, \(\Omega_{\neg \neg}\), we define for types \(X\) and \(Y\) the type \(\partfn(X, Y) := \sum_{D : X \to \Omega_{\neg\neg}} \prod_{x : X} [D(x)] \to Y\).

We will state Church's thesis using Kleene's \(T\) predicate and extraction function \(U\). Recall that \(T(e, x, z)\) is a primitive recursive predicate stating that \(z\) encodes a valid sequence of states for the \(e\)th Turing machine with input \(x\) starting with the initial state and ending with the halting state. \(U(z)\) is then the resulting output of the halting computation.

\begin{defn}
  \emph{Extended Church's thesis}, or \(\mathbf{ECT}\) is the axiom
  \[
    \prod_{f : \partfn(\NN, \NN)}\left\| \sum_{e : \NN}\prod_{x : \NN} \prod_{w : [\pi_0(f)(x)]} \sum_{z : \NN}T(e, x, z) \times U(z) =
      \pi_1(f)(x, w)\right\|
  \]
\end{defn}

\begin{rmk}
  Note that \(\mathbf{ECT}\) does not assert the existence of a computable partial function with the same domain as \(f\) but rather with a domain which is a superset of that of \(f\), and in many cases the domain will be strictly larger. For example, define \(T \subseteq \NN\) to be the set of numbers \(e\) such that the computable function \(\varphi_e\) is total. In the presence of Markov's principle \(T\) is \(\neg \neg\)-stable, and so \(\mathbf{ECT}\) tells us that any function \(\NN^\NN \to \NN\) can be represented as a partial function \(\varphi_e : \NN \rightharpoondown \NN\) whose domain includes \(T\). However, the domain of any computable partial function is computably enumerable, whereas \(T\) is not computably enumerable, and so the domain of \(\varphi_e\) cannot be equal to \(T\).
\end{rmk}

First note that we have an ``absoluteness'' result for partial functions from \(\NN\) to \(\NN\) with \(\neg\neg\)-stable domain, i.e. the following proposition.
\begin{prop}
  \label{prop:partisdiscrete}
  The type of partial functions from \(\NN\) to \(\NN\) with \(\neg\neg\)-stable domain in homotopy type theory is implemented in cubical sets as \(\Delta(\partfn(\NN, \NN))\).
\end{prop}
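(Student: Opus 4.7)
The plan is to unfold the cubical-set interpretation of $\partfn(\NN,\NN)$ one type former at a time and, using the preservation properties of $\Delta$ together with Theorem \ref{thm:negnegstableclassifier}, to commute $\Delta$ outside each layer until we recover $\Delta$ applied to the entire metatheoretic construction. The ingredients we have available are: $\Delta$ preserves all limits, colimits and dependent products, so in particular it preserves the natural number object, finite products and pullbacks, exponentials, and $\Sigma$- and $\Pi$-types; by the list of basic facts in Section \ref{sec:review-cubical-sets}, the cubical-sets natural number object is the interpretation of $\NN$ and it is exactly $\Delta(\NN)$; and by Theorem \ref{thm:negnegstableclassifier}, the classifier for $\neg\neg$-stable h-propositions in cubical sets is $\Delta(m) : \Delta(1) \to \Delta(\Omega_{\neg\neg})$, so the type $\Omega_{\neg\neg}$ is interpreted as $\Delta(\Omega_{\neg\neg})$.

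First I would identify the outer function space. Preservation of exponentials (a consequence of preservation of dependent products and finite limits) identifies the interpretation of $\NN \to \Omega_{\neg\neg}$ with $\Delta(\NN \to \Omega_{\neg\neg})$. Next I would handle the fibre construction $[D(x)]$: by definition this is the pullback of the $\neg\neg$-stable classifier along $D(x)$, so by preservation of pullbacks, for any $D : \NN \to \Omega_{\neg\neg}$ in the metatheory and $x : \NN$, the cubical-sets fibre $[\Delta(D)(\Delta(x))]$ coincides with $\Delta([D(x)])$. More carefully, working in the context $(D : \NN \to \Omega_{\neg\neg},\, x : \NN)$, the display map classifying $[D(x)]$ in cubical sets is the pullback of $\Delta(m)$ along the $\Delta$-image of the evaluation map, and preservation of pullbacks makes this literally $\Delta$ of the corresponding metatheoretic display map.

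With this established, I would assemble the inner dependent type $\prod_{x:\NN}[D(x)] \to \NN$: preservation of $\Pi$-types and of exponentials gives that its interpretation in cubical sets is $\Delta$ applied to its metatheoretic counterpart, uniformly in $D$. Finally, wrapping things up in the outer $\Sigma$ and once more invoking preservation of $\Sigma$-types yields that the interpretation in cubical sets of the whole expression $\sum_{D:\NN\to\Omega_{\neg\neg}}\prod_{x:\NN}[D(x)]\to \NN$ is $\Delta$ of the same expression computed in the metatheory, which is exactly $\Delta(\partfn(\NN,\NN))$.

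I do not expect a serious obstacle: the entire argument is bookkeeping, with the only point demanding care being the treatment of the dependent fibre $[D(x)]$ where $D$ and $x$ range, and this reduces to the Beck--Chevalley-style observation that $\Delta$ preserves the pullback squares used to interpret substitution, together with the fact that the cubical-sets classifier is literally $\Delta(m)$ rather than some other object merely equivalent to it. If anything deserves attention, it is writing down the right sequence of canonical isomorphisms in a contextually coherent way, but no nontrivial homotopical input is needed beyond the identifications already supplied by Theorem \ref{thm:negnegstableclassifier}.
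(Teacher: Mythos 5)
Your proposal is correct and follows essentially the same route as the paper, which argues in one line that \(\Delta\) preserves dependent products and sums, the natural number object, and (by Theorem \ref{thm:negnegstableclassifier}) the classifier for \(\neg\neg\)-stable propositions, and that these suffice to construct \(\partfn(\NN,\NN)\). Your version simply unpacks that bookkeeping layer by layer; no difference in substance.
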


\begin{proof}
  \(\Delta\) preserves all dependent products and sums, the natural number object, and by Theorem \ref{thm:negnegstableclassifier} also preserves the classifier for \(\neg\neg\)-stable propositions. But these suffice to construct \(\partfn(\NN, \NN)\).
\end{proof}

\begin{thm}
  The following axioms can be consistently added to Martin-L\"{o}f type theory:
  \begin{enumerate}
  \item Propositional truncation
  \item The axiom of univalence
  \item The existence of a classifier for \(\neg\neg\)-stable h-propositions
  \item Extended Church's thesis
  \item Markov's principle
  \end{enumerate}
\end{thm}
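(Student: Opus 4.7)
The plan is to exhibit a single model of Martin-L\"of type theory satisfying all five axioms by constructing cubical sets internally to the category of assemblies over Kleene's first algebra \(K_1\), following the general template of \cite{uemuraswan}. Throughout, the metatheory of the cubical construction is taken to be the internal language of assemblies, which itself lives inside a classical set theory where Markov's principle and a classifier for \(\neg\neg\)-stable propositions both hold.

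Standard cubical model constructions carried out internally to assemblies supply propositional truncation and univalence, handling axioms (1) and (2). Since the category of assemblies has a classifier \(\Omega_{\neg\neg}\) for \(\neg\neg\)-stable propositions (as in the second example after Theorem \ref{thm:negnegstableclassifier}), applying that theorem gives \(\Delta(\Omega_{\neg\neg})\) as a classifier for \(\neg\neg\)-stable h-propositions in the cubical assemblies model, handling axiom (3). Markov's principle is a classical realizability property of assemblies; as a statement about discrete data, it lifts to cubical assemblies via the preservation properties of \(\Delta\), handling axiom (5).

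For extended Church's thesis the central reduction is Proposition \ref{prop:partisdiscrete}: \(\partfn(\NN, \NN)\) in cubical assemblies equals \(\Delta\) applied to the corresponding type in assemblies. Hence any internal partial function \(f : \partfn(\NN, \NN)\) corresponds to a pair \((D, g)\) in the assemblies model. In assemblies every morphism is tracked by a partial recursive function, and the canonical inhabitant of \([D(x)]\) (when \(D(x)\) holds) has a fixed trivial realizer, because \(\Omega_{\neg\neg}\) carries the uniform realizability predicate \(E(p) = \{0\}\) described in the example. The tracker of \(g\) then determines a partial recursive code \(\varphi_e\) with \(\varphi_e(x) = g(x, w)\) whenever \(w : [D(x)]\), which is exactly the witness that the truncated existential in \(\mathbf{ECT}\) demands. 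Since \(\mathbf{ECT}\) only requires a code merely, working pointwise in assemblies and then transferring via \(\Delta\) suffices.

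The hard part is confirming that the whole body of the \(\mathbf{ECT}\) statement lies inside structure preserved by \(\Delta\), so that the tracking witness produced in assemblies really does give a witness in the cubical assemblies interpretation. This means checking that \(\Delta\) preserves propositional truncation, identity types and dependent products when restricted to discrete arguments, and that the Kleene predicates \(T\) and \(U\) inside the model agree with their metatheoretic counterparts. Once this absoluteness is in place, the argument reduces cleanly to the realizability tracking in assemblies.
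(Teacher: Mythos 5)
Your outline for axioms (1)--(3) and (5) matches the paper's setup: cubical sets built internally to assemblies over \(K_1\), with \(\Delta(\Omega_{\neg\neg})\) serving as the classifier via Theorem \ref{thm:negnegstableclassifier}. The gap is in your treatment of axiom (4). You attempt to verify \(\mathbf{ECT}\) \emph{directly} in the cubical assemblies model, by tracking a partial function in assemblies and then ``transferring via \(\Delta\)'' on the grounds that the conclusion is only a mere existential. That transfer is exactly the step that does not go through: \(\Delta\) preserves limits, colimits and dependent products, but it does not commute with the cubical propositional truncation \(\|-\|_Y\), which is a path-gluing construction rather than an image factorization. Well-supportedness of \(A \vdash B\) in assemblies does not by itself yield a section of \(\|B\|_A \to A\) in cubical assemblies, and the subtlety of moving between the two notions of truncation is precisely what Theorem \ref{thm:detruncatedisccod} and Corollary \ref{cor:detruncate} are about (and they go in the opposite direction, from a section of the truncation to a section of \(\Gamma\) of the map). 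Whether \(\mathbf{ECT}\) holds outright in cubical assemblies is not established by your argument, and the paper does not claim it.

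What the paper actually does is force axiom (4) by passing to a \emph{reflective subuniverse}: it nullifies the family of propositions \(f : \partfn(\NN,\NN) \vdash \|B\|\), so that \(\mathbf{ECT}\) holds in the localized model by construction. The realizability facts you invoke (discreteness of \(\partfn(\NN,\NN)\) via Proposition \ref{prop:partisdiscrete}, and the tracking argument showing \(A \vdash B\) is well supported in assemblies) are still essential, but they are used for a different purpose: to verify, following \cite[Section 5.1]{uemuraswan}, that nullifying at a well-supported family over a discrete base preserves the natural number type and the empty type. That preservation is what guarantees the localized model is non-trivial, that \(\neg\neg\)-stability is unchanged so \(1 \to \Delta(\Omega_{\neg\neg})\) remains a classifier in the subuniverse, and that Markov's principle survives. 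Your proposal is missing this localization step entirely, and with it the accompanying non-triviality and preservation checks, so as written it does not establish the theorem.
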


\begin{proof}
  Following \cite{uemuraswan} we first construct the cubical assemblies model of homotopy type theory by defining cubical sets internally in assemblies over the first Kleene algebra. We then define a reflective subuniverse where extended Church's thesis is forced to hold by nullification. Namely we nullify the family of propositions defined as the interpretation of the following types in cubical assemblies.
  \[
    f : \partfn(\NN, \NN) \vdash \left\| \sum_{e : \NN}\prod_{x : \NN} \prod_{w : [\pi_0(f)(x)]} \sum_{z : \NN}T(e, x, z) \times U(z) =
      \pi_1(f)(x, w)\right\|
  \]
  To ease notation, we define \(A\) and \(B\) as follows.
  \begin{align*}
    A &:= \partfn(\NN, \NN) \\
    B &:= \sum_{e : \NN} \prod_{x : \NN} \prod_{w : [\pi_0(f)(x)]} \sum_{z : \NN}T(e, x, z) \times U(z) = \pi_1(f)(x, w)
  \end{align*}

  By Proposition \ref{prop:partisdiscrete} the interpretation of \(A\) in cubical assemblies is discrete, and moreover is the image under \(\Delta\) of the interpretation of the same type in assemblies. Since the category of assemblies satisfies extended Church's thesis, by a similar argument to that in \cite[Corollary 3.1.4]{vanoosten}, the interpretation of \(A \vdash B\) in assemblies is well supported. We can therefore apply the same arguments as in \cite[Section 5.1]{uemuraswan} to show that the reflective subuniverse has the same natural number type and empty type as the original cubical assemblies model. The latter implies that the model is non trivial, and that every \(\neg\neg\)-stable h-proposition in the reflective subuniverse is already \(\neg\neg\)-stable in the original model. It follows that \(1 \to \Delta(\Omega_{\neg \neg})\) still acts as a classifier for \(\neg\neg\)-stable h-propositions in the reflective subuniverse. We can therefore use the same argument as in \cite[Section 6]{uemuraswan} to show that the resulting model is non trivial and satisfies extended Church's thesis and Markov's principle.
\end{proof}

\section{Weakly \texorpdfstring{\(\Pi^0_1\)}{Pi01} h-propositions}
\label{sec:weakly-pi0_1-h}

In Section \ref{sec:two-defin-dedek} we gave a construction of the Dedekind reals in cubical sets that relied on having a classifier for \(\neg\neg\)-stable propositions in our metatheory. Since this involves some impredicativity, it is not always viewed as constructively acceptable. We therefore also give a predicative proof using a smaller class of h-propositions that suffice to construct the Dedekind real numbers.

\begin{defn}
  \label{def:pi01mono}
  A monomorphism \(A \to B\) is \(\Pi^0_1\) if there is a function \(g : B \times \NN \to 2\) such that \(\prod_{b : B} (A_b \leftrightarrow \prod_{n : \NN} g(b, n) = 0)\).
\end{defn}

\begin{example}
  Every exact locally cartesian closed category with natural number object has an extensional monomorphism \(1 \to \Omega_{\Pi^0_1}\) such that every \(\Pi^0_1\) monomorphism is a pullback of \(1 \to \Omega_{\Pi^0_1}\), as a special case of Proposition ~\ref{prop:extnmonoinpretopos}.
\end{example}

\begin{example}
  Categories of assemblies have classifiers for \(\Pi^0_1\)-monomorphisms, assuming they are constructed in a metatheory that also has a classifier for \(\Pi^0_1\)-monomorphisms.
\end{example}

\begin{defn}
  An h-proposition \(f : X \to Y\) is \emph{weakly \(\Pi^0_1\)} if there is an h-proposition \(R \to Y \times \NN\) together with terms witnessing \(\prod_{y : Y} \prod_{n : \NN} \| R_{y, n} + \neg X_{y}\|\) and \(\prod_{y : Y} (X_y \leftrightarrow \prod_{n : \NN} R_{y, n})\).
\end{defn}

The following two propositions are not formally required, but explain our choice of terminology.
\begin{prop}
  Every \(\Pi^0_1\) h-proposition is weakly \(\Pi^0_1\).
\end{prop}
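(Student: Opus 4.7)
The plan is to unpack both definitions and produce the required data directly from the $\Pi^0_1$ witness. Suppose $f : X \to Y$ is an h-proposition together with a function $g : Y \times \NN \to 2$ such that $\prod_{y : Y}(X_y \leftrightarrow \prod_{n : \NN} g(y, n) = 0)$, which is the natural extension of Definition \ref{def:pi01mono} from monomorphisms to h-propositions (so either $f$ itself is a monomorphism, or we use the weaker fibrewise formulation; the argument is insensitive to this choice).

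First I would take $R \to Y \times \NN$ to be the subobject of $Y \times \NN$ consisting of pairs $(y, n)$ with $g(y, n) = 0$, constructed as the pullback of the point inclusion $1 \hookrightarrow 2$ picking out $0$ along $g$. As a monomorphism into $Y \times \NN$ this is automatically an h-proposition, giving the first piece of data required by the definition of weakly $\Pi^0_1$. Next I would verify the two fibrewise obligations. The biconditional $\prod_{y : Y}(X_y \leftrightarrow \prod_{n : \NN} R_{y, n})$ is immediate: by construction $R_{y, n} \leftrightarrow (g(y, n) = 0)$, so the right hand side unfolds to the product $\prod_{n : \NN}(g(y, n) = 0)$, which is exactly the $\Pi^0_1$ witness for $X_y$. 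For the decidability clause $\prod_{y : Y}\prod_{n : \NN} \|R_{y, n} + \neg X_y\|$, fix $y$ and $n$ and do case analysis on the decidable value $g(y, n) \in 2$: when $g(y, n) = 0$ take $\inl$ with the resulting element of $R_{y, n}$; when $g(y, n) = 1$ take $\inr$, observing that any proof of $X_y$ would force $g(y, n) = 0$ via the $\Pi^0_1$ equivalence, contradicting $g(y, n) = 1$.

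There is essentially no serious obstacle here; the proof is pure bookkeeping once $R$ is defined correctly. In fact the argument produces the strictly stronger statement where the propositional truncation is unnecessary, since the case split on $g(y, n) \in 2$ is genuinely decidable. The interest of the proposition is therefore not in the proof itself but in motivating the terminology, preparing for the converse-style comparisons between $\Pi^0_1$ and weakly $\Pi^0_1$ h-propositions that are presumably developed in the subsequent material.
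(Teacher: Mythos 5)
Your proof is correct and follows essentially the same route as the paper: define \(R_{y,n} := (g(y,n) = 0)\), get the biconditional for free from the \(\Pi^0_1\) witness, and obtain \(R_{y,n} + \neg X_y\) by deciding \(g(y,n) \in 2\), with the second case refuting \(X_y\) via the equivalence. Your observation that the truncation is not actually needed is consistent with the paper's proof, which likewise produces the untruncated disjunction.
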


\begin{proof}
  Suppose that we have \(g : B \times \NN \to 2\) as in Definition \ref{def:pi01mono}. Define \(R_{b, n} := g(b, n) = 0\). For any \(b : B\) and \(n : \NN\), either \(g(b, n) = 0\) or \(g(b, n) = 1\). The former is precisely \(R_{b, n}\), and the latter implies \(\neg \prod_{n : \NN} R_{b, n}\) and thereby \(\neg [b]\), and so we have \(R_{b, n} \vee \neg [b]\).
\end{proof}

\begin{prop}
  Suppose that every function \(\NN \to \NN\) is computable. Then a subobject of \(\NN^k\), say \(A \hookrightarrow \NN^k\) is a \(\Pi^0_1\)-monomorphism if and only if there is a primitive recursive formula \(\phi(x_1,\ldots,x_k; y)\) in the language of first order arithmetic such that
  \[
    \forallq {x_1,\ldots,x_k}{A(x_1,\ldots,x_k)} \leftrightarrow \forallq {y}{\phi(x_1,\ldots,x_k;y)}
  \]
\end{prop}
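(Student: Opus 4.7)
The plan is to prove both implications; the backward direction is essentially formal, while the forward direction is the place where the computability hypothesis gets used. For the backward direction, I would take a primitive recursive $\phi(x_1,\ldots,x_k; y)$ with $A(\vec{x}) \leftrightarrow \forall y\, \phi(\vec{x};y)$ and define $g : \NN^k \times \NN \to 2$ by cases on the decidable predicate $\phi$, setting $g(\vec{x}, y) := 0$ exactly when $\phi(\vec{x};y)$ holds. This $g$ is a total function and directly witnesses that $A \hookrightarrow \NN^k$ is $\Pi^0_1$ in the sense of Definition~\ref{def:pi01mono}.

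For the forward direction, I would start from a $g : \NN^k \times \NN \to 2$ witnessing $A$ as a $\Pi^0_1$-monomorphism. Composing with a primitive recursive bijection $\NN^{k+1} \cong \NN$ and the inclusion $2 \hookrightarrow \NN$ lets me view $g$ as a total function $\NN \to \NN$, hence computable by hypothesis. Kleene's normal form theorem then gives an index $e$ with $g(\vec{x}, y) = U(\mu z.\, T(e, \langle\vec{x}, y\rangle, z))$; totality of $g$ gives $\exists z\, T(e, \langle\vec{x}, y\rangle, z)$ for every input, and since $T$ encodes a deterministic computation, at most one such $z$ exists. These two facts combine to give
\[
  g(\vec{x}, y) = 0 \;\leftrightarrow\; \forall z\, \bigl(T(e, \langle\vec{x}, y\rangle, z) \to U(z) = 0\bigr),
\]
and pairing the two universal quantifiers into one then yields a primitive recursive formula $\phi(\vec{x}; w) := T(e, \langle \vec{x}, \pi_0 w\rangle, \pi_1 w) \to U(\pi_1 w) = 0$ satisfying $A(\vec{x}) \leftrightarrow \forall w\, \phi(\vec{x}; w)$, which is the required conclusion after renaming $w$ to $y$.

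The main obstacle I foresee is not conceptual but bookkeeping: checking that the chosen $T$-predicate has the uniqueness property for $z$ (and otherwise replacing it by its minimising version $T(e,x,z) \wedge \forall z' < z.\, \neg T(e, x, z')$, which remains primitive recursive), and verifying that applying the hypothesis ``every function $\NN \to \NN$ is computable'' to a function valued in $\{0, 1\} \subseteq \NN$ is unproblematic in the intended metatheory. All other steps are standard recursion-theoretic manipulations.
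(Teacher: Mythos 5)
Your proof is correct and follows essentially the same route as the paper: both directions hinge on using the computability hypothesis to obtain an index \(e\) for \(g\) and then rewriting \(\forall y\, g(\vec{x},y)=0\) as a single universally quantified primitive recursive matrix via the Kleene \(T\)-predicate (the paper uses the equivalent ``halts within \(\pi_1(i(y))\) steps'' device) together with a primitive recursive pairing bijection to merge the two universal quantifiers. If anything, your matrix \(T(e,\langle\vec{x},\pi_0 w\rangle,\pi_1 w)\to U(\pi_1 w)=0\) gets the polarity right more explicitly than the paper's stated formula, which asserts the conjunction ``halts and equals \(1\)'' where an implication or negation is evidently intended.
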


\begin{proof}
  Let \(g : \NN^k \times \NN \to 2\) be as in Definition \ref{def:pi01mono}. Let \(e\) be a code for a Turing machine whose output matches \(g\), i.e. for all \(x_1,\ldots,x_k, y\) we have \(\varphi_e(x_1,\ldots,x_k, y) = g(x_1,\ldots,x_k, y)\). By standard arguments we may assume we are given a primitive recursive bijection \(i : \NN \stackrel{\cong}{\to} \NN \times \NN\). We take \(\phi(x_1,\ldots,x_k; y)\) to be the formula stating that if \(\varphi_e(x_1,\ldots,x_k, \pi_0(i(y)))\) halts within \(\pi_1(i(y))\) steps then \(\varphi_e(x_1,\ldots,x_k, \pi_0(i(y))) = 0\), which is clearly primitive recursive.

  The converse is clear.
\end{proof}

The motivation for the definition of weakly \(\Pi^0_1\) h-proposition is that we can apply it to the definition of the Dedekind reals in terms of cocuts, while also using some of our earlier observations about \(\neg \neg\)-stable h-propositions.
\begin{lemma}
  \label{lem:cocutswklypi01}
  Every cocut \(C\) is weakly \(\Pi^0_1\).
\end{lemma}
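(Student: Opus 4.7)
The plan is to take the witnessing family $R \hookrightarrow \QQ \times \NN$ to be the h-proposition defined by $R_{a,n} := (a + \tfrac{1}{n+1} \in C)$, i.e.\ the pullback of $C \subseteq \QQ$ along the map $(a,n) \mapsto a + \tfrac{1}{n+1}$. Since $C$ is already an h-proposition (as a subset of $\QQ$), so is $R$. The reindexing $n \mapsto n+1$ is just to avoid the undefined $\tfrac{1}{0}$; as $n$ ranges over $\NN$, $\tfrac{1}{n+1}$ ranges over all unit fractions $\tfrac{1}{m}$ with $m \geq 1$.

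First I would verify the equivalence $(a \in C) \leftrightarrow \prod_{n : \NN} R_{a,n}$. Unfolding $R_{a,n}$, this says that $a \in C$ iff $a + \tfrac{1}{n+1} \in C$ for every $n : \NN$, which is precisely the content of Proposition \ref{prop:closednessaltdef} (after the harmless reindexing above). Next I would verify $\prod_{a : \QQ}\prod_{n : \NN} \|R_{a,n} + \neg(a \in C)\|$. This is an immediate application of locatedness of $C$ to the strict inequality $a < a + \tfrac{1}{n+1}$: locatedness gives us that we merely have $a \notin C$ or $a + \tfrac{1}{n+1} \in C$, and $a \notin C$ is by definition $\neg(a \in C)$.

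I do not anticipate a genuine obstacle; both required pieces are already packaged in earlier results of the section, so the proof is essentially a matter of noting what goes where. The only minor point is reading ``either $a \notin C$ or $b \in C$'' in the definition of locatedness as the propositionally truncated disjunction (consistent with the ``merely exists'' phrasing used for openness), which is what allows the output of locatedness to land in $\|R_{a,n} + \neg(a \in C)\|$ rather than the untruncated coproduct.
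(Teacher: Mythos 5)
Your proof is correct and is essentially the paper's own argument: the paper also takes $R_{a,n} := (a + \tfrac{1}{n} \in C)$, gets the disjunction from locatedness, and gets the biconditional from Proposition~\ref{prop:closednessaltdef}. The $n+1$ reindexing and the remark about reading locatedness as a truncated disjunction are harmless clarifications of details the paper leaves implicit.
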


\begin{proof}
  We define \(R_{a, n} := a + \frac{1}{n + 1} \in C\). Locatedness tells us that for all \(n\), \(a \notin C\) or \(R_{a, n}\). Proposition ~\ref{prop:closednessaltdef} tells us that \(a \in C\) if and only if \(\prod_{n : \NN} R_{a, n}\).
\end{proof}

\begin{lemma}
  \label{lem:pi01nnstable}
  Every weakly \(\Pi^0_1\) h-proposition is \(\neg \neg\)-stable.
\end{lemma}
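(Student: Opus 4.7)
The plan is to work entirely in the internal type theory of HoTT and show, for each $y : Y$, that $\neg\neg X_y \to X_y$. Using the equivalence $X_y \leftrightarrow \prod_{n : \NN} R_{y, n}$ supplied by the definition, it suffices to show that $\neg\neg X_y$ implies $R_{y, n}$ for every $n : \NN$. So I fix $y$ and $n$ and assume $\neg\neg X_y$, aiming to produce a term of $R_{y, n}$.

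The input I have at this point is the hypothesis $\| R_{y, n} + \neg X_y \|$. Since $R \to Y \times \NN$ is an h-proposition, the fibre $R_{y, n}$ is itself an h-proposition. Hence by the standard elimination principle for propositional truncation, it suffices to produce an untruncated function
\[
  R_{y, n} + \neg X_y \;\longrightarrow\; R_{y, n}.
\]
On the left summand this is the identity. On the right summand, an element of $\neg X_y$ combined with the hypothesis $\neg\neg X_y$ yields a contradiction, from which $R_{y, n}$ follows by ex falso. Applying this function under the truncation and then using $\| R_{y, n} \| \to R_{y, n}$ (valid because $R_{y, n}$ is an h-proposition) gives the desired element.

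Putting the pieces together: assume $\neg\neg X_y$; for each $n$, construct $R_{y, n}$ as above; conclude $\prod_{n : \NN} R_{y, n}$ and therefore $X_y$ via the supplied equivalence. This establishes $\neg\neg X_y \to X_y$ for every $y$, i.e.\ $\neg\neg$-stability of $f$.

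There is no real obstacle here: the argument is a short internal calculation and does not require any of the specifically cubical machinery from the earlier sections. The only point that needs a moment's care is justifying the use of the propositional truncation eliminator, which is legitimate precisely because the fibres of the auxiliary h-proposition $R \to Y \times \NN$ are themselves h-propositions, allowing the codomain of the eliminating map to be $R_{y, n}$ directly rather than a further truncation.
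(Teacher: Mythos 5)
Your argument is correct and is essentially the paper's own proof: fix $y$ with $\neg\neg X_y$, reduce $X_y$ to $\prod_{n:\NN} R_{y,n}$ via the supplied equivalence, and for each $n$ extract $R_{y,n}$ from $\|R_{y,n} + \neg X_y\|$ by ruling out the right summand. Your explicit justification of the truncation elimination (that $R_{y,n}$ is an h-proposition, so the eliminator may target it directly) is a detail the paper leaves implicit, but the route is the same.
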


\begin{proof}
  We work internally in homotopy type theory. We assume we are given an element of \(\neg \neg X_y\) for some \(y : Y\). To show \(X_y\) we can equivalently prove \(\prod_{n : \NN} R_{y, n}\). For any \(n : \NN\) we have by assumption either \(R_{y, n}\) or \(\neg X_{y}\). The latter contradicts \(\neg \neg X_{y}\), and so we have \(R_{y, n}\). Since this is true for all \(n\), we deduce \(X_{y}\).
\end{proof}

\begin{thm}
  \label{thm:pi01hclassifier}
  If \(1 \to \Omega_{\Pi^0_1}\) is a classifier for \(\Pi^0_1\) monomorphisms in our metatheory, then every weakly \(\Pi^0_1\) h-proposition, \(f : X \to Y\), is a homotopy pullback of \(\Delta(1) \to \Delta(\Omega_{\Pi^0_1})\) in cubical sets.
\end{thm}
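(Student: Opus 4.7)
The plan is to imitate the proof of Theorem \ref{thm:negnegstableclassifier}, using the additional information in the weakly $\Pi^0_1$ structure to upgrade ``monomorphism'' to ``$\Pi^0_1$ monomorphism''. By Lemma \ref{lem:pi01nnstable}, $f$ is $\neg\neg$-stable, so it is equivalent to $f' := \neg\neg f : \neg\neg X \to Y$, which by Lemma \ref{lem:negismono} is both a fibration and a monomorphism. It therefore suffices to show $\Gamma(f') : (\neg\neg X)_0 \to Y_0$ is a $\Pi^0_1$ monomorphism in the metatheory, after which Theorem \ref{thm:internaliseclassifier} exhibits $f'$, and hence $f$ up to homotopy, as a pullback of $\Delta(1) \to \Delta(\Omega_{\Pi^0_1})$.

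To construct the classifying function $g : Y_0 \times \NN \to 2$, I apply Corollary \ref{cor:detruncate} to condition (i) of the weakly $\Pi^0_1$ structure. That condition says that the propositional truncation of the fibration $R + \neg X \to Y \times \NN$ has a section, where $\neg X$ is pulled back from $\neg X \to Y$ (a fibration by Lemma \ref{lem:negismono}) and the coproduct is formed in $\cubset/(Y \times \NN)$. Corollary \ref{cor:detruncate} yields a section $Y_0 \times \NN \to \Gamma(R + \neg X)$, and since $\Gamma$ is a left adjoint we have $\Gamma(R + \neg X) \cong \Gamma(R) + \Gamma(\neg X)$; composing with the canonical map to $2$ then defines $g$.

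What remains is to verify $(\neg\neg X)_0 = \{y \in Y_0 : \forall n\, g(y,n) = 0\}$. For the forward direction, if $g(y,n) = 1$ for some $n$ then the section lands in $\Gamma(\neg X)$ over $(y,n)$, which places $y$ in $(\neg X)_0 \cong \neg X_0$ by Lemma \ref{lem:negpointspointsneg}, contradicting $y \in (\neg\neg X)_0 \cong \neg\neg X_0$ by a second application of the lemma. For the reverse direction, the assumption produces a family $(r_n)$ of dimension-$0$ elements of $R$ over $(y,n)$; because $\NN \cong \Delta \NN$ is discrete, $(\prod_n R_{y,n})_0 \cong \prod_n (R_{y,n})_0$, so $(r_n)$ defines a dimension-$0$ element of $\prod_n R_{y,n}$ internally at $y$, which the equivalence from condition (ii) transports to a dimension-$0$ element of $X_y$, placing $y$ in the image of $X_0 \to Y_0$ and hence in $(\neg\neg X)_0$. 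The main subtlety is this reverse direction: it depends on the identification $(\prod_n R_{y,n})_0 \cong \prod_n (R_{y,n})_0$ coming from $\NN \cong \Delta \NN$, and on the fact that the internal logical equivalence in (ii) provides honest cubical morphisms rather than only fiberwise biconditionals.
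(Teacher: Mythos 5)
Your proposal follows essentially the same route as the paper's own proof: reduce to the monomorphism \(\neg\neg X \to Y\) via Lemmas \ref{lem:pi01nnstable} and \ref{lem:negismono}, apply Corollary \ref{cor:detruncate} to the fibration \(R + \neg X \to Y \times \NN\) to extract \(g : \Gamma(Y) \times \NN \to 2\) using that \(\Gamma\) preserves coproducts and \(\NN\) is discrete, verify the \(\Pi^0_1\) condition on \(\Gamma\) of the map, and conclude with Theorem \ref{thm:internaliseclassifier}. Your verification that \((\neg\neg X)_0\) coincides with \(\{y : \forall n\, g(y,n)=0\}\) (via Lemma \ref{lem:negpointspointsneg} and the discreteness of \(\NN\)) is correct and in fact spells out a step the paper leaves terse.
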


\begin{proof}
  We need to check that every weakly \(\Pi^0_1\) h-proposition \(f : X \to Y\) is equivalent to one obtained by pulling back \(\Delta(1) \to \Delta(\Omega_{\Pi^0_1})\). First note that we may assume without loss of generality that \(f\) is a monomorphism, since by Lemma ~\ref{lem:pi01nnstable} it is equivalent to the double negation \(\sum_{y : Y} \neg \neg X_y \to Y\), which is a monomorphism by Lemma ~\ref{lem:negismono}. In order to apply Theorem ~\ref{thm:internaliseclassifier} we need to check that \(\Gamma(X) \to \Gamma(Y)\) is \(\Pi^0_1\). By applying Corollary ~\ref{cor:detruncate} with \(X := \sum_{y : Y, n : \NN} R_{y, n} + \neg X_{y}\) and \(Y := Y \times \NN\), together with the definition of weakly \(\Pi^0_1\) h-proposition we have a section of \(\Gamma(R + \neg X) \to \Gamma (Y \times \NN)\). Since \(\Gamma\) preserves all limits and colimits and \(\NN\) is discrete, this gives us a map \(g : \Gamma (Y) \times \NN \to \Gamma(R) + \Gamma(\neg X)\). For each \(y \in \Gamma(Y)\) we can define a function \(g'_y : \NN \to 2\) where \(g'_y(n) = 0\) when \(g(y, n) = \inl(z)\) for \(z \in \Gamma(R)\) and \(g'_y(n) = 1\) when \(g(y, n) = \inr(\ast)\). Using the term witnessing \(\prod_{y : Y} (X_y \leftrightarrow \prod_{n : \NN} R_{y, n})\) we can show that each fibre \(\Gamma(X)_y\) is inhabited if and only if \(g'_y(n) = 0\) for all \(n\). Hence \(\Gamma(X) \to \Gamma(Y)\) is indeed \(\Pi^0_1\) and so we can apply Theorem ~\ref{thm:internaliseclassifier}.
\end{proof}

\begin{rmk}
  Since we were able to explicitly define binary sequences \(g'_y\) in the proof above, it might appear at first that we did not need the extensionality condition and could have used instead e.g. the map \(1 \to 2^\NN\) pointing to the constantly zero sequence in place of the classifier \(1 \to \Omega_{\Pi^0_1}\). However, this would not work. The sequence \(g'_y\) depends on the choice of point \(y\), and so we could have different choices of sequence for each of two points joined by a path, whereas in order to get a well defined map to \(\Delta(\Omega_{\Pi^0_1})\) we need to assign the same element of \(\Omega_{\Pi^0_1}\) to both points. Note that when we defined such a map in Theorem ~\ref{thm:internaliseclassifier} we made essential use of extensionality.
\end{rmk}

\begin{cor}
  Suppose we are given a classifier \(1 \to \Omega_{\Pi^0_1}\) for \(\Pi^0_1\) monomorphisms in our metatheory. Let \(U_n\) be a universe of small types. Then it holds in the interpretation of HoTT in cubical sets that every weakly \(\Pi^0_1\) h-proposition in \(U_n\) is equivalent to one belonging to \(\Delta(\Omega_{\Pi^0_1})\).
\end{cor}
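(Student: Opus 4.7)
The plan is to apply Theorem~\ref{thm:pi01hclassifier} not to a single weakly \(\Pi^0_1\) h-proposition but to a ``generic'' one, obtained by collecting all such h-propositions together with their defining data into a single type. Working in the interpretation of HoTT in cubical sets, I would form the type
\[
  W := \sum_{X : U_n} \left(\mathrm{isProp}(X)\right) \times \sum_{R : \NN \to U_n} \left(\prod_{n : \NN}\mathrm{isProp}(R_n)\right) \times \left(\prod_{n : \NN} \| R_n + \neg X \|\right) \times \left(X \leftrightarrow \prod_{n : \NN} R_n\right),
\]
which is the type of weakly \(\Pi^0_1\) h-propositions in \(U_n\) together with a chosen witnessing family. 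Externally, this is a map \(W \to 1\) in cubical sets, and the first projection \(\pi_0\) together with the bundled data gives a map \(f : E \to W\), where \(E = \sum_{w : W} \pi_0(w)\), which is by construction a weakly \(\Pi^0_1\) h-proposition in the external sense of the excerpt.

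Next, I would apply Theorem~\ref{thm:pi01hclassifier} to \(f\). This yields a classifying map \(\chi : W \to \Delta(\Omega_{\Pi^0_1})\) together with a homotopy pullback square exhibiting \(f\) as a homotopy pullback of \(\Delta(1) \to \Delta(\Omega_{\Pi^0_1})\) along \(\chi\). Translating back into the internal language, this pullback says precisely that for each \(w : W\), the fibre \(\pi_0(w)\) is equivalent to the h-proposition decoded from \(\chi(w) : \Delta(\Omega_{\Pi^0_1})\). Since every weakly \(\Pi^0_1\) h-proposition in \(U_n\) arises as \(\pi_0(w)\) for some \(w : W\) (by bundling it with any choice of witnessing structure, which exists by hypothesis), this is the statement of the corollary.

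The main obstacle is essentially bookkeeping rather than a substantive mathematical step: one must confirm that \(W\) is a well-defined type in the interpretation (at a possibly larger universe level than \(U_n\)), that the externalisation of \(f\) really is a weakly \(\Pi^0_1\) h-proposition with the data the theorem requires, and that a classifying map \(\chi : W \to \Delta(\Omega_{\Pi^0_1})\) in cubical sets corresponds under interpretation to an assignment, for every weakly \(\Pi^0_1\) h-proposition in \(U_n\), of an element of \(\Delta(\Omega_{\Pi^0_1})\) decoding to an equivalent proposition. The critical input supplied by Theorem~\ref{thm:pi01hclassifier}, which in turn relies on the extensionality used in Theorem~\ref{thm:internaliseclassifier}, is that such \(\chi\) exists coherently across all of \(W\) at once, not just pointwise; once this is in hand, the corollary is immediate.
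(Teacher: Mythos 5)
Your proposal is correct and is essentially the paper's own argument: the paper likewise applies Theorem~\ref{thm:pi01hclassifier} to the generic family, taking \(Y\) to be the type of all weakly \(\Pi^0_1\) h-propositions in \(U_n\) and \(X \to Y\) the projection from the inhabited ones. Your extra care in bundling the witnessing data \(R\) untruncated into \(W\) so that the generic family is itself weakly \(\Pi^0_1\) is a reasonable piece of bookkeeping that the paper leaves implicit.
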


\begin{proof}
  We apply Theorem \ref{thm:pi01hclassifier} where \(Y\) is the type of all weakly \(\Pi^0_1\) h-propositions in \(U_n\) and \(X \to Y\) the projection map from inhabited weakly \(\Pi^0_1\) h-propositions in \(U_n\).
\end{proof}

\begin{cor}
  \label{cor:realsexistpredicative}
  Assume that there is a classifier for all \(\Pi^0_1\) monomorphisms in our metatheory. Then there is a collection of all Dedekind real numbers in cubical sets.
\end{cor}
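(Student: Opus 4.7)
The plan is to follow exactly the structure of the impredicative Corollary immediately after Proposition \ref{prop:closednessaltdef}, with the classifier $\Delta(1) \to \Delta(\Omega_{\Pi^0_1})$ from Theorem \ref{thm:pi01hclassifier} replacing the $\neg\neg$-stable classifier $\Delta(1) \to \Delta(\Omega_{\neg\neg})$ used there. This gives a direct, predicative reconstruction of the type of Dedekind reals by simply choosing a smaller but still adequate class of classifiable h-propositions.

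First I would unfold the definition of a cocut $C \subseteq \QQ$ as a family $(a \in C)_{a \in \QQ}$ of h-propositions satisfying boundedness, closedness, and locatedness. By Lemma \ref{lem:cocutswklypi01}, each fibre $a \in C$ is a weakly $\Pi^0_1$ h-proposition. The corollary following Theorem \ref{thm:pi01hclassifier}, applied to the universal family over the base $\QQ \times (\text{candidate cocut data})$, then shows that the membership relation of any cocut corresponds uniformly to a map $\chi : \QQ \to \Delta(\Omega_{\Pi^0_1})$ with $[\chi(a)]$ giving back $a \in C$. I would then define the type of Dedekind reals internally in cubical sets as
\[
\sum_{\chi : \QQ \to \Delta(\Omega_{\Pi^0_1})} \iscocut(\chi),
\]
where $\iscocut(\chi)$ abbreviates the internal conjunction of the three cocut conditions, each translated by replacing $a \in C$ with $[\chi(a)]$. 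All pieces required for this construction --- function types, $\Sigma$-types, a discrete $\QQ$ built from $\NN$, and the classifier $\Delta(\Omega_{\Pi^0_1})$ itself --- are available in cubical sets under the stated hypothesis.

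The main thing to check, and the only step where something could plausibly go wrong, is that $\iscocut(\chi)$ is a genuine internal predicate on $\chi$ and that the correspondence between cocuts and their classifying maps is an equivalence. Expressibility is routine: boundedness is an existential over $\QQ$, closedness is a universal implication between weakly $\Pi^0_1$ propositions, and locatedness is a disjunction involving $\neg[\chi(a)]$ and $[\chi(b)]$, all of which are admissible type-theoretic constructions preserved or supplied by $\Delta$. The equivalence with the naive type of cocuts is then immediate from the extensionality of the classifier in Theorem \ref{thm:internaliseclassifier}, which guarantees that the assignment $C \mapsto \chi_C$ is both well-defined and uniquely determined, and from Lemma \ref{lem:pi01nnstable}, which ensures the fibres behave as honest monomorphisms on the nose rather than only up to homotopy.
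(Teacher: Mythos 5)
Your proposal is correct and follows essentially the same route as the paper: define the reals as the subtype of \(\Delta(\Omega_{\Pi^0_1})^\QQ\) (equivalently your \(\sum_{\chi} \iscocut(\chi)\)) cut out by the cocut conditions, then use Lemma \ref{lem:cocutswklypi01} together with Theorem \ref{thm:pi01hclassifier} and its corollary to verify internally that every cocut is represented by some such \(\chi\). The extra appeal to Lemma \ref{lem:pi01nnstable} at the end is harmless but already subsumed in the proof of Theorem \ref{thm:pi01hclassifier}.
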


\begin{proof}
  Internally in HoTT we can think of \(\Delta(1) \to \Delta(\Omega_{\Pi^0_1})\) as a family of h-propositions, which by Theorem \ref{thm:pi01hclassifier} includes all weakly \(\Pi^0_1\) h-propositions. We now work internally in HoTT, and define a subtype of \(\Delta(\Omega_{\Pi^0_1})^\QQ\) consisting of those \(C : \QQ \to \Delta(\Omega_{\Pi^0_1})\) which are cocuts. We need to check that it holds internally in HoTT that every cocut belongs to this collection. However, for every cocut \(C : \QQ \to \hprop\), and every rational \(a : \QQ\), \(C(a)\) is weakly \(\Pi^0_1\) by Lemma ~\ref{lem:cocutswklypi01}, and so \(C\) is indeed equal to one in this collection.
\end{proof}

\section{A remark on proof theoretic strength}
\label{sec:remark-cons-strength}

\newcommand{\mltt}{\mathbf{MLTT}}

In \cite{rathjenunivalence} Rathjen observes that since it is possible to define models of type theory with univalence in a constructive and predicative metatheory, the proof theoretic strength of type theory is unchanged by adding the univalence axiom. In particular, writing \(\mathbf{MLTT}^-\) for the theory obtained by removing \(W\)-types from Martin-L\"{o}f type theory, and \(\mathbf{UA}\) for the univalence axiom, the proof theoretic strength of \(\mathbf{MLTT}^- + \mathbf{UA}\) is the same as \(\mltt^-\) \cite[Corollary 7.2]{rathjenunivalence}. From Corollary ~\ref{cor:realsexistpredicative} we can see the same argument applies with the addition of the Dedekind reals. Namely, write \(\RR_\mathbf{D}\) for the axiom that the Dedekind reals exist (at the first universe level, say). We then have the following result.

\begin{cor}
  \(\mltt^- + \mathbf{UA} + \RR_\mathbf{D}\) has the same strength as \(\mltt^-\), which is the same as \(\mathbf{ATR}_0\). Its proof theoretic ordinal is \(\Gamma_0\).
\end{cor}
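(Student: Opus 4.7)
The plan is to follow Rathjen's strategy from \cite{rathjenunivalence} verbatim and add one ingredient to validate $\RR_\mathbf{D}$. The background facts that $\mltt^-$ has the same proof theoretic strength as $\mathbf{ATR}_0$ and proof theoretic ordinal $\Gamma_0$ are already carried by Rathjen's \cite[Corollary 7.2]{rathjenunivalence}, so the only new work is to show that adjoining $\RR_\mathbf{D}$ to $\mltt^- + \mathbf{UA}$ does not raise the strength.

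The lower bound, that $\mltt^-$ has strength at most $\mltt^- + \mathbf{UA} + \RR_\mathbf{D}$, is immediate from the inclusion of theories. For the upper bound I would interpret $\mltt^- + \mathbf{UA} + \RR_\mathbf{D}$ in (a conservative extension of) $\mltt^-$ by reusing Rathjen's predicative cubical set model of $\mltt^- + \mathbf{UA}$ and then additionally checking that the Dedekind reals form a type in that model. By Corollary \ref{cor:realsexistpredicative}, this reduces to exhibiting a classifier $1 \to \Omega_{\Pi^0_1}$ for $\Pi^0_1$ monomorphisms in the metatheory.

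Concretely, one takes $\Omega_{\Pi^0_1}$ to be the quotient of $2^{\NN}$ by the equivalence relation $g \sim g' \;:\Leftrightarrow\; \bigl(\prod_{n : \NN} g(n) = 0\bigr) \leftrightarrow \bigl(\prod_{n : \NN} g'(n) = 0\bigr)$, with $1 \to \Omega_{\Pi^0_1}$ picking out the class of the constantly zero sequence. The underlying carrier is an arithmetic function type and the equivalence relation is itself $\Pi^0_1$, so the construction fits inside the setoid-style interpretation of $\mltt^-$ on which Rathjen's analysis rests; in that setting Proposition \ref{prop:extnmonoinpretopos} applies and produces the required extensional classifier without resizing or impredicative comprehension. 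With $\Omega_{\Pi^0_1}$ available, Theorem \ref{thm:pi01hclassifier} and Corollary \ref{cor:realsexistpredicative} go through unchanged, so the Dedekind reals exist in the cubical set model and $\RR_\mathbf{D}$ is validated.

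The main obstacle I expect is the bookkeeping needed to confirm that Rathjen's metatheory genuinely supports both the exactness underlying Proposition \ref{prop:extnmonoinpretopos} and the specific $\Pi^0_1$-classifier above, without inadvertently smuggling in additional proof theoretic strength. Since the quotient involved is arithmetically definable this should be routine, but it is the one place where the present argument goes beyond what Rathjen explicitly verifies. Once this is settled, both the equivalence of strengths and the ordinal $\Gamma_0$ follow immediately from \cite[Corollary 7.2]{rathjenunivalence}.
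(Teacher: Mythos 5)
Your proposal is correct and follows essentially the same route as the paper: the paper's own justification is exactly Rathjen's predicative cubical model plus Corollary \ref{cor:realsexistpredicative}, with the required classifier \(1 \to \Omega_{\Pi^0_1}\) obtained (as in the example after Definition \ref{def:pi01mono}) from Proposition \ref{prop:extnmonoinpretopos} applied to the quotient of \(2^\NN\) you describe. The only quibble is that the equivalence relation you quotient by is a biconditional of \(\Pi^0_1\) statements rather than itself \(\Pi^0_1\), but since only exactness of the metatheory is needed this does not affect the argument.
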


In \cite[Section 11.3]{hottbook} an alternative definition of real number is given, based on the Cauchy reals, but using a higher inductive principle that ensures Cauchy completeness, which does not necessarily hold for the Cauchy real numbers in the absence of the axiom of countable choice. Write \(\RR_{\mathbf{HIT}}\) for the axiom that the HIT reals, as defined in loc. cit., exist (at the first universe level, say). Although it is likely \(\RR_{\mathbf{HIT}}\) can be constructed in cubical sets by the same methods as in \cite{chmhitsctt}, such a proof would require an infinitary inductive definition in the metatheory, which is not available in absolutely predicative systems such as \(\mltt^-\). This suggests the following conjecture.

\begin{conjecture}
  \(\mltt^- + \mathbf{UA} + \RR_{\mathbf{HIT}}\) has strictly greater proof theoretic strength than that of \(\mltt^-\).
\end{conjecture}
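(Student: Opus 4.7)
The plan is to bracket the strength of $\mltt^- + \mathbf{UA} + \RR_{\mathbf{HIT}}$ above and below, and to exhibit a provable gap above $\mltt^-$. The crucial observation is that $\RR_{\mathbf{HIT}}$, as defined in \cite[Section 11.3]{hottbook}, is a quotient inductive-inductive type one of whose generating constructors is genuinely infinitary: it takes a Cauchy approximation $\QQ_+ \to \RR_{\mathbf{HIT}}$ and returns its limit. This constructor, together with the accompanying induction principle, should encode a form of transfinite iteration not available in $\mltt^-$, which lacks $W$-types.

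For the lower bound I would first exhibit a $\Pi^0_2$ consequence of $\RR_{\mathbf{HIT}}$ that $\mltt^-$ cannot prove. A natural candidate is the well-foundedness of a notation system of ordinal strictly greater than $\Gamma_0$; the Cauchy-completion operation should internalize enough monotone iteration to produce fixed points of strictly positive operators, in the spirit of $\widehat{\mathbf{ID}}_1$. Concretely, I would encode ordinal terms as canonical equivalence classes of Cauchy approximations, use the recursor of $\RR_{\mathbf{HIT}}$ to perform the required transfinite collapses, and then apply the standard ordinal analysis of $\widehat{\mathbf{ID}}_1$ (or a mild extension thereof) to extract a notation with ordinal above $\Gamma_0$.

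For the upper bound I would interpret $\mltt^- + \mathbf{UA} + \RR_{\mathbf{HIT}}$ in a predicative metatheory augmented by just enough machinery to support a single quotient inductive-inductive definition for Cauchy completion, following the cubical approach of \cite{chmhitsctt} indicated in the remark preceding the conjecture, and combining this with Rathjen's interpretation of univalence \cite{rathjenunivalence}. This would confirm that the added strength, while nontrivial, is bounded, and locate it in a well-understood region of the predicative hierarchy.

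The main obstacle will be the lower-bound step. It is easy to believe informally that the limit constructor forces strength past $\Gamma_0$, but pinning this down requires extracting a specific well-founded notation from the comparatively austere recursor of a single HIT, rather than from a full schema of inductive definitions. A plausible route is to first construct internally an initial algebra for some strictly positive operator by realizing its term algebra through suitable canonical Cauchy approximations, and then apply the known ordinal analysis of that algebra. If this direct route fails, a fallback is a model-theoretic obstruction: exhibiting a model of $\mltt^-$ built in a weak metatheory whose ordinals are bounded by $\Gamma_0$, and verifying that the constructors of $\RR_{\mathbf{HIT}}$ provably cannot be instantiated in such a model.
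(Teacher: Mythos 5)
First, a point of calibration: the paper does not prove this statement --- it is stated as a conjecture, supported only by the heuristic that constructing \(\RR_{\mathbf{HIT}}\) in cubical sets by the method of \cite{chmhitsctt} would seem to require an infinitary inductive definition in the metatheory, which \(\mltt^-\) lacks. So there is no proof in the paper to compare yours against, and your proposal has to stand on its own as an argument.

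It does not, and the gap is exactly where you yourself locate it: the lower bound, which is the entire content of the conjecture. Everything there is conditional (``should encode'', ``should internalize'', ``a plausible route''). The concrete plan --- realizing the term algebra of a strictly positive operator through ``canonical Cauchy approximations'' and using the recursor of \(\RR_{\mathbf{HIT}}\) to perform ``transfinite collapses'' --- is not backed by any construction. The induction principle of the HIT Cauchy reals only eliminates into types carrying the structure of a Cauchy-complete premetric space satisfying the accompanying coherence laws; it is not a general-purpose engine for initial algebras of strictly positive operators, and \(\RR_{\mathbf{HIT}}\) itself is a set with one fixed universal property. You would need to exhibit the encoding of ordinal terms, show the eliminator's motive can actually be instantiated, and verify well-foundedness of the resulting notation system inside \(\mltt^- + \RR_{\mathbf{HIT}}\); no reason is offered to believe any of these steps go through. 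Your fallback is also logically insufficient for the conclusion: showing that the constructors of \(\RR_{\mathbf{HIT}}\) cannot be instantiated in some particular model of \(\mltt^-\) built in a weak metatheory would at best establish that \(\RR_{\mathbf{HIT}}\) is unprovable in \(\mltt^-\), not that adding it strictly increases proof-theoretic strength --- an axiom can be independent without raising consistency strength (univalence itself, per \cite{rathjenunivalence}, is the paper's running example of exactly this phenomenon). The upper-bound paragraph is reasonable but is not required for the statement as given and is likewise not carried out. As it stands, the proposal is a research programme whose decisive step is open, and the conjecture remains a conjecture.
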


Note that in the presence of countable choice, the Cauchy real numbers are already Cauchy complete, and therefore satisfy the higher inductive principle for the higher inductive Cauchy reals. However, countable choice easily holds in many models of extensional type theory with propositional truncation, e.g. the regular locally cartesian closed category of sets within \(\mathbf{CZF} + \{\mathbf{wInacc}(n) \;|\; n > 0\} + \mathbf{RDC}\), as listed in \cite[Theorem 7.1]{rathjenunivalence}. Hence, if the conjecture above is true, it would provide a natural example of an axiom which raises the consistency strength of \(\mltt^-\) when combined with the univalence axiom, while having no effect on the consistency strength of extensional type theory.

\section{Conclusion}
\label{sec:conclusion}

We can think of h-propositions that are double negation stable as those that are proof irrelevant in a strong sense. One way that this manifests is in the key idea we saw in Lemma \ref{lem:negismono}: in cubical sets they are interpreted as monomorphisms, i.e. types where any two elements are strictly equal. We can therefore think of them as possessing no nondegenerate paths or homotopies, even up to strict equality. In particular we can obtain a classifier from the constant cubical set on the corresponding classifier in our metatheory.

When we construct cubical sets inside a realizability model, such as assemblies, we can additionally say that double negation stable h-propositions carry no computational information, in the sense of uniform maps of assemblies.

Although the class of double negation stable h-propositions is rather restricted, we saw two places where they can play a useful role. By defining Dedekind real numbers in terms of cocuts, we ensured that all of the computational information associated to a real number is contained within the terms witnessing boundedness and locatedness, with the underlying subset of \(\QQ\) entirely proof irrelevant.

The second place we used double negation stable h-propositions was in our formulation of extended Church's thesis. The domain of a partial function \(\NN \rightharpoondown \NN\) is a function \(D : \NN \to \mathbf{hProp}\). We should expect the partial function to be a computable partial function when for each \(n : \NN\), \(D(n)\) carries no computational information beyond \(n\) itself, which we can ensure by requiring that it is \(\neg \neg\)-stable. We made this precise through realizability, and gave an example of a model of HoTT where extended Church's thesis holds.

\bibliographystyle{alpha}
\bibliography{mybib}{}

\end{document}